\numberwithin{equation}{section}
\theoremstyle{plain}
\newtheorem{thm}{Theorem}[section]
\newtheorem{prop}[thm]{Proposition}
\newtheorem{lem}[thm]{Lemma}
\newtheorem{cor}[thm]{Corollary}
\newtheorem{eg}[thm]{{Example}}
\theoremstyle{remark}
\newtheorem{rema}[thm]{Remark}
\newcommand{\al}{\alpha}
\newcommand{\N}{{\mathbb N}}
\newcommand{\field}{{k}}
\newcommand{\gfrak}{{\mathfrak g}}
\newcommand{\id}{{\mbox{id}}}
\newcommand{\kow}{{\varDelta}}
\newcommand{\ot}{\otimes}
\newcommand{\rwo }{\leq _R}
\newcommand{\kspan}{\mathrm{span}}
\newcommand{\uqbp}{{U^{\ge 0}}}
\newcommand{\uqg}{{U_q(\mathfrak{g})}}
\newcommand{\vep}{\varepsilon}
\newcommand{\wurz}{\Phi}
\newcommand{\Z}{{\mathbb Z}}
\begin{document}

\title[Homogeneous right coideal subalgebras]{Homogeneous
right coideal subalgebras of quantized enveloping algebras}
\thanks{The work of I. Heckenberger was supported
by a Heisenberg professorship of the German Research Foundation (DFG)}

\author{I.~Heckenberger}
\address{Istvan Heckenberger,
Fachbereich Mathematik und Informatik, Philipps-Universit\"at
Marburg, Hans-Meerwein-Stra\ss e, D-35032 Marburg, Germany}
\email{heckenberger@mathematik.uni-marburg.de}
\author{S.~Kolb}
\address{Stefan Kolb, School of Mathematics and Statistics, Newcastle University, Newcastle upon Tyne NE1 7RU, UK}
\email{stefan.kolb@ncl.ac.uk}

\subjclass[2000]{17B37; 81R50}
\keywords{Quantum groups, coideal subalgebras, Weyl group, weak order}

\begin{abstract}
  For a quantized enveloping algebra of a complex semisimple Lie algebra with deformation parameter not a root of unity, we classify all homogeneous right coideal subalgebras. 
  Any such right coideal subalgebra is determined uniquely by a triple
  consisting of two elements of the Weyl group and a subset of the set of
  simple roots satisfying some natural conditions. The essential ingredients
  of the proof are the Lusztig automorphisms and the classification of
  homogeneous right coideal subalgebras of the Borel Hopf subalgebras of
  quantized enveloping algebras obtained previously by H.-J.~Schneider and the first named author.
\end{abstract}

\maketitle

\section{Introduction}

Unlike their classical analogs, quantized enveloping algebras do not admit
many Hopf subalgebras. Folklore has it that Lie subalgebras of a, say,
semisimple complex Lie algebra $\gfrak$ should possess quantum analogs which
are coideal subalgebras of the corresponding quantized enveloping algebra
$\uqg$. Once this point of view was established in the early nineties, large
classes of quantum group analogs of homogeneous spaces were constructed via
(one-sided or two-sided) coideal subalgebras of $\uqg$ \cite{a-NS95},
\cite{a-Dijk96}, \cite{MSRI-Letzter}. All these constructions, however, are
closely tied to the underlying classical situation.

In the present paper we follow a different route which aims at a general
classification of right coideal subalgebras of $\uqg$ under mild additional
assumptions. More precisely, let $U^0$ denote the subalgebra generated by all
group-like standard generators of $\uqg$.
We call a right coideal subalgebra of $\uqg$
\textit{homogeneous}
if it contains $U^0$. In the present paper we determine all homogeneous right coideal subalgebras of
$\uqg$, for $q$ not a root of unity, in terms of pairs of elements of the
corresponding Weyl group $W$. Our main result, Theorem \ref{thm:rcsaU}, states the
following:

\medskip

\textit{There exists a one-to-one correspondence between the set of
homogeneous right coideal subalgebras of $\uqg$ and the set of triples
$(x,u,J)$, where $x,u\in W$, the element $u^{-1}$ is a beginning of $x$, and
$J$ is an arbitrary subset of $\Pi\cap x \Pi$, where $\Pi$ denotes a fixed
basis of the root system of $\gfrak$.}

\medskip

Here we call $u^{-1}$ a beginning of $x$ if $u^{-1}\le_R x$ with respect to the weak (or Duflo) order $\le_R$ on $W$, see Section \ref{sec:uqbp} for a precise definition. 

Our theorem implies in particular that the number $|B(W)|$ of homogeneous right
coideal subalgebras of $\uqg$ depends only on $W$ and not explicitly on
$\gfrak$. With the help of the computer algebra program FELIX
\cite{inp-ApelKlaus,a-FELIX} we determine the numbers $|B(W)|$ for all $W$ of
order less than $10^6$. The largest such example is the Weyl group of type
$B_7$ which has order $645120$. Our calculations confirm the results obtained previously
by Kharchenko and Sagahon in \cite{a-KharSag08} for type $A_n$.
However, our numbers differ from those given recently by Pogorelsky
in type $G_2$ \cite {a-Pogorelsky11} and by Kharchenko, Sagahon,
and Rivera in type $B_n$ with $n\ge 3$ \cite{a-KharSagRiv11}. We comment on these differences
in Remark~\ref{rem:diffnumbers}.

The systematic investigation of homogeneous right coideal subalgebras of $\uqg$ was initiated by
Kharchenko in \cite{a-KharSag08}. This led to the astonishing conjecture that the number of
homogeneous right coideal subalgebras of the Borel Hopf subalgebra $\uqbp $ of $\uqg$ coincides
with the order of the Weyl group $W$ of $\gfrak$. This conjecture provided the
first indication that coideal subalgebras of $\uqg$ can be classified in terms
of Weyl group combinatorics. It was proved for $\gfrak$ of type $A_n$ and
$B_n$ by Kharchenko \cite{a-KharSag08}, \cite{a-Kharchenko09}.

Recently, Kharchenko's conjecture was proved in a much wider context  by H.-J.~Schneider
and the first named author, namely for bosonizations
$\mathcal{B}(V)\#H$ of Nichols algebras of semisimple Yetter-Drinfeld modules $V$
over an arbitrary Hopf algebra $H$ with bijective antipode
\cite{a-HeckSchn09p}. For $\uqg$ they showed that the homogeneous right
coideal subalgebras of $\uqbp $ are precisely the algebras $U^+[w]U^0$,
where $U^+[w]$ denotes the quantum analog of a nilpotent Lie algebra
introduced in \cite{a-DCoKacPro95} for any $w\in W$. The algebras $U^+[w]$
have recently also undergone a renaissance within the general program of
investigation of prime spectra of quantum algebras, see for example
\cite{a-Yakimov10}. It is noteworthy that in the classification of prime
ideals of $U^+[w]$ which are invariant under the adjoint action of $U^0$, the
Bruhat (or strong) order plays an important role while the poset structure of
homogeneous right coideal subalgebras of $\uqg$ given by inclusion is identified with the weak order of $W$.

To describe homogeneous right coideal subalgebras of $\uqg$ one uses the
triangular decomposition of $\uqg$. One observes that any homogeneous right
coideal subalgebra $C$ of $\uqg$ can be written as a product $C=C^-C^+$ where
$C^-$ and $C^+$ are homogeneous right coideal subalgebras of the negative and
the positive Borel Hopf subalgebras $U^{\le 0}$ and $\uqbp $, respectively.
As the homogeneous right coideal subalgebras of $U^{\le 0}$ and $\uqbp $
are known by the classification described above, it remains to determine when
their product is a subalgebra. Again, Kharchenko et al.~\cite{a-KharSag08},
\cite{a-KharSagRiv11} 
were able to handle the series $A_n$ and $B_n$.
However, their approach does not
exhibit any obvious relation to the combinatorics of Weyl groups.

The classification of homogeneous right coideal subalgebras of $\uqg$
presented in this paper is certainly not the end of the story. In
\cite{a-heko11} we classified all right coideal subalgebras $C$ of $\uqbp $
for which $C\cap U^0$ is a Hopf algebra. In view of the results of the present
paper it now seems feasible to classify right coideal
subalgebras $C$ of $\uqg$ under the weaker assumption that $C\cap U^0$ is a
Hopf algebra. This class contains in particular all quantum symmetric pair
coideal subalgebras defined in \cite{MSRI-Letzter}. We expect that such a
classification will conceptualize and simplify the understanding of many
existing examples of right coideal subalgebras of $\uqg$.

\section{Preliminaries}
\subsection{Quantized enveloping algebras}
We mostly follow the notation and conventions of \cite{b-Jantzen96}.
Let $\gfrak $ be a finite-dimensional complex semisimple Lie algebra and let
$\wurz $ be the root system with respect to a fixed Cartan
subalgebra. We also fix a basis $\Pi$ of  $\Phi$ and denote by
$\wurz^+$ the corresponding set of positive roots.
Let $W$ be the Weyl group of $\gfrak $ and let $(\cdot ,\cdot )$ be
the invariant scalar product on the real vector space generated by
$\Pi $ such that $(\al ,\al )=2$ for all short roots in each
component. For any $\beta\in \wurz$ we write $s_\beta$ to denote the
reflection at the hyperplane orthogonal to $\beta$ with respect to
$(\cdot, \cdot)$.
Let $Q=\Z \Pi $ be the root lattice . 
Let $U=\uqg $ be the quantized enveloping algebra of $\gfrak $
in the sense of \cite[Chapter 4]{b-Jantzen96}.
More precisely, let $\field $ be a field and
fix an element $q\in \field $ with $q\not=0$ and $q^n\not=1$ for all
$n\in \N $. Then $U$ is the
unital associative algebra defined over $\field $
with generators $K_\al , K_\al ^{-1}, E_\al , F_\al $ for all $\al \in \Pi
$ and relations given in \cite[4.3]{b-Jantzen96}. 
By \cite[Proposition 4.11]{b-Jantzen96}
there is a unique Hopf algebra structure on $U$ with coproduct $\kow$,
counit $\vep$, and antipode $S$ such that
\begin{align}
  \label{eq:UHopf1}
  \kow (E_\al )=&E_\al \ot 1+K_\al \ot E_\al ,&
  \vep (E_\al )=&0,& S(E_\al)&=-K_\al^{-1} E_\al ,\\
  \kow (F_\al )=&F_\al \ot K_\al ^{-1}+1 \ot F_\al ,&
  \vep (F_\al )=&0,& S(F_\al)&=-F_\al K_\al,\label{eq:UHopf2}\\
  \kow (K_\al )=&K_\al \ot K_\al ,& \vep (K_\al )=&1, &S(K_\al)&=K_\al^{-1}.
  \label{eq:UHopf3}
\end{align}
As in \cite[Chapter 4]{b-Jantzen96} let $U^+$, $U^-$, and $U^0$
be the subalgebras of $U$ generated by the sets $\{E_\al \,|\,\al \in \Pi
\}$, $\{F_\al \,|\,\al \in \Pi
\}$, and  $\{K_\al ,K_\al ^{-1}\,|\,\al \in \Pi \}$, respectively. Moreover,
let $\uqbp $ and $U^{\le 0}$ be the subalgebras of $U$ generated by the
sets $\{ K_\al ,K_\al ^{-1},E_\al \,|\,\al \in \Pi \}$ and
$\{ K_\al ,K_\al ^{-1},F_\al \,|\,\al \in \Pi \}$, respectively.
In fact, $U^0$ and $\uqbp$ and $U^{\le 0}$ are Hopf subalgebras of $\uqg $.
For any $\beta \in Q$ define $K_\beta =\prod _{\al \in
\Pi }K_\al ^{n_\al }$ if $\beta =\sum _{\al \in \Pi }n_\al \al $ for
some $n_\alpha\in \Z$. 

For any subspace $M$ of $U$ which is invariant under conjugation by all $K_\al$ for $\al \in \Pi$, we define the weight space $M_\beta$ of weight $\beta\in Q$ by
\begin{align*}
  M_\beta=\{m\in M\,|\, K_\al m K_\al^{-1}= q^{(\al,\beta)}m\mbox{ for all $\al\in \Pi$} \}.
\end{align*}
We will apply this notation in particular if $M$ is one of $U, U^+,$ or $U^-$. We call the elements of $M_\beta$ weight vectors of weight $\beta$ in $M$.
For two subspaces $A,B$ of $\uqg $ we write $AB$ for the subspace
$\kspan_\field \{ab\,|\,a\in A,b\in B\}$. For a subset $A\subseteq \uqg $
we write $\field \langle A\rangle $ for the unital subalgebra of $\uqg $ generated by $A$.
\subsection{The triangular decomposition}
The algebra $U$ has a triangular decomposition in the sense that the multiplication map
\begin{align}\label{eq:triang}
  U^-\ot U^0 \ot U^+ \rightarrow U 
\end{align}
is an isomorphism of vector spaces. Recall from the introduction that we call a right coideal subalgebra of $U$ homogeneous if it contains $U^0$. The triangular decomposition of $U$ descends to the level of homogeneous right coideal subalgebras of $U$. This result is in principle contained in \cite[Section 4]{MSRI-Letzter} and was also proved in \cite{a-Kharchenko10}. Here we recall the argument for the convenience of the reader.
\begin{prop}\label{prop:triang}
  Let $C$ be a homogeneous right coideal subalgebra of $U$. Then the multiplication map
  \begin{align}\label{eq:Ctriang}
    (U^-\cap C)\ot U^0 \ot (U^+\cap C) \rightarrow C
  \end{align}
  is an isomorphism of vector spaces.
\end{prop}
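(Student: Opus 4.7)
The injectivity is immediate: the multiplication in question factors through the triangular multiplication $U^-\otimes U^0\otimes U^+\to U$, and its restriction to the subspace $(U^-\cap C)\otimes U^0\otimes (U^+\cap C)$ is therefore injective. For surjectivity, since $U^0\subseteq C$ forces $C$ to be stable under conjugation by each $K_\alpha$, the algebra $C$ inherits the weight decomposition $C=\bigoplus_\beta C_\beta$, and it suffices to treat weight vectors.

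Given a weight vector $c\in C_\beta$, I would use the triangular decomposition $U=U^{\leq 0}\otimes U^+$ to write $c=\sum_J d_J E^J$, with $\{E^J\}$ a PBW basis of $U^+$ consisting of weight vectors of weight $\lambda_J\in Q^+$ and $d_J\in U^{\leq 0}_{\beta-\lambda_J}$. Let $\lambda^*$ be the maximal such $\lambda_J$, in a total order refining the height partial order on $Q^+$, for which $d_J\neq 0$. Exploiting the coproduct formulas
$$\Delta(F^I)=F^I\otimes K_{-\mu_I}+(\text{rest}),\qquad \Delta(E^J)=E^J\otimes 1+\cdots+K_{\lambda_J}\otimes E^J,$$
where the rest in $\Delta(F^I)$ has second tensor factor containing nonzero $F$-components and the dots in $\Delta(E^J)$ denote terms whose second factor has weight strictly between $0$ and $\lambda_J$, together with the maximality of $\lambda^*$, one checks that applying $\mathrm{id}\otimes\phi$ to $\Delta(c)\in C\otimes U$, where $\phi$ is the PBW-coordinate functional selecting $K_\nu E^{J_s}$ for some $s$ with $\lambda_{J_s}=\lambda^*$, isolates only the combination of top contributions. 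The result is an element of $C$ of the form $b_{s,\nu}K_{\lambda^*}$ for some $b_{s,\nu}\in U^{\leq 0}$, and since $K_{\lambda^*}\in U^0\subseteq C$ one deduces $b_{s,\nu}\in C$; invoking the counit axiom $(\mathrm{id}\otimes\varepsilon)\Delta(d_{J_s})=d_{J_s}$ then gives $d_{J_s}=\sum_\nu b_{s,\nu}\in C\cap U^{\leq 0}$.

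The same extraction performed within the Hopf subalgebra $U^{\leq 0}$ yields $C\cap U^{\leq 0}=(C\cap U^-)U^0$. To conclude, I would run the mirror extraction on the other triangular decomposition $U=U^-\otimes U^{\geq 0}$, combined with the analogous argument inside the Hopf subalgebra $U^{\geq 0}$, to produce elements $e_s\in C\cap U^+$ whose $\lambda^*$-weight component matches $E^{J_s}$ modulo $\bigoplus_{\lambda<\lambda^*}U^+_\lambda$. Then $c-\sum_s d_{J_s}e_s$ lies in $C$ (both factors do), has strictly smaller maximum PBW $U^+$-weight than $\lambda^*$, and induction on $\lambda^*$ completes the proof. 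The main obstacle is precisely this symmetric $U^+$-extraction: one must show that the leading $U^+$-weight piece of $c$ is realised not merely by elements of $U^+$ but by genuine elements of $C\cap U^+$. The key is to apply the right coideal property a second time to the other triangular decomposition and to coordinate the two extractions so that the resulting factors in $C\cap U^-$ and $C\cap U^+$ are compatible with $U^0$.
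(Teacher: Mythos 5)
Your injectivity argument, the reduction to weight vectors, and the first extraction are sound: applying $\mathrm{id}\otimes\phi$ to $\Delta(c)$ with $\phi$ the coordinate functional of $K_\nu E^{J_s}$ does isolate the top contributions, and the counit trick puts $d_{J_s}$ into $C\cap U^{\le 0}$; the analogous argument inside the Hopf subalgebra $U^{\le 0}$ then gives $C\cap U^{\le 0}=(C\cap U^-)U^0$. The gap is exactly where you flag it, and the repair you propose does not close it. The ``mirror extraction'' on $U=U^-\otimes U^{\ge 0}$ is the literal symmetric counterpart of your first step: writing $c=\sum_I F^I g_I$ with $g_I\in U^{\ge 0}$ and pairing the second tensor factor against $F^{I}K_\nu$, it extracts the coefficient $g_{I^*}\in C\cap U^{\ge 0}$ of the monomial $F^{I^*}$ of \emph{maximal $F$-weight}. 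There is no reason why $g_{I^*}$ should involve the monomials $E^{J_s}$ of maximal $E$-weight: already for $c=F^{I_1}E^{J_1}+F^{I_2}E^{J_2}$ with $\mu_{I_1}>\mu_{I_2}$ but $\lambda_{J_1}<\lambda_{J_2}$, your two extractions hand you $E^{J_1}\in C$ and $F^{I_2}\in C$, neither of which allows you to cancel the leading term $F^{I_2}E^{J_2}$, and no ``coordination'' of the two functionals manufactures the missing $e_s$. So the induction cannot start.

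The missing idea is a \emph{different} leading term of the same coproduct. Write the top-weight part of $c$ as $\sum_{\mu,i}F_{\mu,i}K_\mu E_{\mu,i}$ with $E_{\mu,i}\in U^+_{\lambda^*}$ and, for each $\mu$, both families $\{E_{\mu,i}\}_i$ and $\{F_{\mu,i}\}_i$ linearly independent. Combining the leg $1\otimes F_{\mu,i}$ of $\Delta(F_{\mu,i})$ with the leg $E_{\mu,i}\otimes 1$ of $\Delta(E_{\mu,i})$ gives
\begin{align*}
\Delta(F_{\mu,i}K_\mu E_{\mu,i})\in K_\mu E_{\mu,i}\otimes F_{\mu,i}K_\mu+\sum_{\gamma<\lambda^*}U_\gamma\otimes U_{\beta-\gamma},
\end{align*}
so the \emph{entire} $E$-monomial sits in the first tensor factor, which is the factor that the right coideal property forces into $C$. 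Projecting the first factor onto weight $\lambda^*$ (killing all lower terms by maximality) and pairing the second factor against the linearly independent family $\{F_{\mu,i}K_\mu\}$ yields $K_\mu E_{\mu,i}\in C$, hence $E_{\mu,i}\in C\cap U^+$; a symmetric use of the same term gives $F_{\mu,i}\in C\cap U^-$. With this single observation the subtraction-and-induction scheme goes through, and the detour through $U=U^-\otimes U^{\ge 0}$ and the intermediate algebras $C\cap U^{\le 0}$, $C\cap U^{\ge 0}$ becomes unnecessary. This is precisely how the paper argues.
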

\begin{proof}
  In view of the triangular decomposition \eqref{eq:triang} of $U$
  it suffices to show that the multiplication map \eqref{eq:Ctriang}
  is surjective.
  To this end consider any weight vector $x\in C_\beta$ of weight $\beta\in Q$.
  Using the triangular decomposition \eqref{eq:triang} one can write
  \begin{align*}
    x=\sum_{\mu, \alpha, i} F_{\beta-\al,\mu,i}K_\mu E_{\al,\mu,i}
  \end{align*}
  where for fixed $\mu,\al\in Q$, the elements of $\{E_{\alpha,\mu,i}\}$ are linearly independent weight vectors of weight $\alpha$ in $U^+$, and similarly, the elements of $\{F_{\beta-\alpha,\mu,i}\}$ are linearly independent weight vectors of weight $\beta-\alpha$ in $U^-$.
  The relations \eqref{eq:UHopf1} - \eqref{eq:UHopf3} imply that for any $\alpha, \mu, i$ one has
\begin{align}\label{eq:uptohigher}
  \kow(F_{\beta-\al,\mu,i}K_\mu E_{\al,\mu,i})\in K_\mu E_{\al,\mu,i}\ot
  F_{\beta-\al,\mu,i}K_\mu  + \sum_{\gamma<\al} U_{\gamma} \ot U_{\beta-\gamma}.
\end{align}
Now choose $\alpha\in Q$ maximal such that
$E_{\alpha,\mu,i}\neq 0\neq F_{\beta-\alpha,\mu,i}$ for some $\mu,i$.
By \eqref{eq:uptohigher}, the linear independence of the set
$\{F_{\beta-\alpha,\mu ,i}K_\mu \,|\,\mu\in Q\}$ and the maximality
of $\al$ imply that $K_\mu E_{\al, \mu,i}\in C$
for all $\mu,i$. As $U^0\subseteq C$ one gets $E_{\al, \mu,i}\in C$. A similar argument proves that $F_{\beta-\alpha,\mu,i}\in C$ for all $\mu,i$.
Now one obtains
$$x\in (U^-\cap C)U^0(U^+\cap C) $$
by induction on $\alpha$.
\end{proof}
\subsection{Homogeneous right coideal subalgebras of $\uqbp $}\label{sec:uqbp}
For any right coideal subalgebra $C$ of $U$ with $U^0\subseteq C$,
Proposition~\ref{prop:triang} implies that $C=C^- C^+$
where $C^+=(C\cap U^+)U^0$ and $C^-=(C\cap U^-)U^0$
are right coideal subalgebras of $\uqbp $ and $U^{\le 0}$, respectively,
both containing $U^0$. Such coideal subalgebras of $\uqbp $
were classified in \cite{a-HeckSchn09p}. In order to formulate this result,
we first recall the Lusztig automorphisms $T_\alpha$
of $U$ defined for $\al\in \Pi$ in \cite[8.14]{b-Jantzen96}
and the quantum analogs $U^+[w]$ of nilpotent Lie algebras defined for
any $w\in W$. 

Let $w\in W$ be an element of length $\ell (w)=t$ and choose $\al
_1,\dots,\al _t\in \Pi $ such that $s_{\al _1}s_{\al _2}\cdots s_{\al
  _t}$ is a reduced expression of $w$. Until the end of this section we will keep these notations whenever we are in need of a reduced expression of an element $w\in W$.
  For all $i\in \{1,2,\dots,t\}$
let $\beta _i=s_{\al _1}\cdots s_{\al _{i-1}}\al _i$. We define
\begin{align*}
  \Phi^+(w)=\{\al\in \Phi^+\,|\,w^{-1}\al<0\}
\end{align*}
and one verifies that $\Phi^+(w)=\{\beta_i\,|\, i=1,\dots,t\}$.
Moreover, one defines an automorphism $T_w=T_{\al _1}T_{\al _2} \cdots T_{\al _t}$ which by \cite[8.18]{b-Jantzen96} is independent of the chosen reduced expression for $w$. The inverse of $T_w$ is obtained by
\begin{align}\label{eq:Tw-1}
  T_w^{-1}=\tau \circ T_{w^{-1}}\circ \tau
\end{align}
where $\tau$ is the involutive algebra antiautomorphism of $U$ determined by
\begin{align*}
  \tau(E_\al)=E_\al,\quad \tau(F_\al)=F_\al, \quad \tau(K_\al)=K_\al^{-1},\quad \tau(K_\al^{-1})=K_\al,
\end{align*}
for all $\al \in \Pi$, see \cite[8.18(6)]{b-Jantzen96}. The following Lemma will be useful to characterize right coideal subalgebras of $U^{\le 0}$ in the next subsection.
\begin{lem} \label{le:S-tau}
  Let $\mu =\sum_{\alpha\in \Pi} n_\alpha \alpha \in Q$ and $u\in U^-_{-\mu }$. Then 
  $ \tau(u)= c(\mu) S(u) K_{-\mu}$
  where
  $$c(\mu)=(-1)^{\sum_{\al\in\Pi}n_\alpha} q^{\frac{1}{2}(\mu ,\mu )-
  \frac{1}{2}\sum_{\al\in \Pi} n_{\al}(\al,\al) }$$
  only depends on $\mu$ and not on $u$.
\end{lem}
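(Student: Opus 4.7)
The plan is to proceed by induction on the height $\hght(\mu)=\sum_{\al\in \Pi} n_\al$, using the fact that $U^-$ is generated as an algebra by $\{F_\al\mid \al\in \Pi\}$ and hence $U^-_{-\mu}$ is linearly spanned by products $F_{\al_1}\cdots F_{\al_k}$ with $\al_1+\cdots+\al_k=\mu$. Because $\tau$ and $S$ are both algebra antiautomorphisms, it is enough to verify the formula on the generators and to show that the asserted form is closed under taking products, the product factor being controlled by a suitable cocycle identity on the scalars $c(\mu)$.

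For the base case $\mu=\al\in \Pi$ a direct computation gives $\tau(F_\al)=F_\al$ and $S(F_\al)K_{-\al}=(-F_\al K_\al)K_{-\al}=-F_\al$, while the defining formula yields $c(\al)=(-1)\cdot q^{\frac{1}{2}(\al,\al)-\frac{1}{2}(\al,\al)}=-1$, so both sides agree. For the inductive step, write an arbitrary weight vector in $U^-_{-\mu-\nu}$ as a product $uv$ with $u\in U^-_{-\mu}$, $v\in U^-_{-\nu}$, assume the formula for $u$ and $v$, and expand
\[
  \tau(uv)=\tau(v)\,\tau(u)=c(\mu)\,c(\nu)\,S(v)\,K_{-\nu}\,S(u)\,K_{-\mu}.
\]
The one nontrivial manipulation is to pull $K_{-\nu}$ past $S(u)$. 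Since $S(F_\al)=-F_\al K_\al$ is a weight vector of weight $-\al$, the antipode $S$ preserves the $U^0$-adjoint weight on $U^-$, so $S(u)$ has weight $-\mu$ and this commutation produces the scalar $q^{(\mu,\nu)}$. Combined with $S(v)S(u)=S(uv)$, one obtains
\[
  \tau(uv)=c(\mu)\,c(\nu)\,q^{(\mu,\nu)}\,S(uv)\,K_{-\mu-\nu}.
\]

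To close the induction it remains to verify the cocycle identity $c(\mu+\nu)=c(\mu)\,c(\nu)\,q^{(\mu,\nu)}$, which follows at once from $(\mu+\nu,\mu+\nu)=(\mu,\mu)+2(\mu,\nu)+(\nu,\nu)$ together with additivity of $\sum n_\al$ and $\sum n_\al(\al,\al)$ in $\mu$, so the plan reduces to a short scalar check. I do not expect any real obstacle. The only point that demands care is the fact that $S(u)$ has weight $-\mu$ rather than $\mu$; this sign determines the exponent of $q$ produced when moving $K_{-\nu}$ through $S(u)$, and a sign error here would mismatch the $q$-contribution to $c(\mu+\nu)$.
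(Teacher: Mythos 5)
Your proof is correct and is essentially the paper's own argument: both check the formula on the generators $F_\al$ (and trivially for $\mu=0$) and then propagate it to products using that $\tau$ and $S$ are algebra antiautomorphisms, the commutation $K_{-\nu}S(u)=q^{(\mu,\nu)}S(u)K_{-\nu}$, and the cocycle identity $c(\mu+\nu)=c(\mu)c(\nu)q^{(\mu,\nu)}$. The weight bookkeeping you flag as the delicate point is handled the same way in the paper and your signs are right.
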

\begin{proof}
  The lemma holds for $\mu =0$ and for $\mu \in \Pi $ by definition of $\tau
  $.
  Thus it holds for all $\mu $ since $\tau $ is an algebra antiautomorphism
  and
  \begin{align*}
    c(\mu _1)c(\mu _2)S(u_2)K_{-\mu _2}S(u_1)K_{-\mu _1}=&\;
    c(\mu _1)c(\mu _2)q^{(\mu _1,\mu _2)}S(u_2)S(u_1)K_{-\mu _1-\mu _2}\\
    =&\;c(\mu _1+\mu _2)S(u_1u_2)K_{-\mu _1-\mu _2}
  \end{align*}
  for all $\mu _1,\mu _2\in Q$ and $u_1\in U^-_{-\mu _1}$, $u_2\in U^-_{-\mu
  _2}$.
\end{proof}
In \cite[8.21]{b-Jantzen96} Lusztig root
vectors in $U^+$ are defined by $E_{\beta _i}=T_{\al_1}\cdots
T_{\al_{i-1}}E_{\al_i}$ for all  $i\in \{1,\dots,t\}$.
It is known \cite[8.21(1)]{b-Jantzen96} that $E_{\beta _i}\in U^+_{\beta _i}$
for all $i\in \{1,\dots,t\}$. Moreover, if $\beta _i\in \Pi $
for some $i$ then $E_{\beta _i}$ coincides with the standard generator
$E_{\beta _i}$ of $U^+$ by \cite[8.20]{b-Jantzen96}.
Following \cite{a-DCoKacPro95} the subspace 
\begin{align}\label{eq:U+wdef}
 U^+[w]=\kspan _\field \{ E_{\beta _t}^{a_t} \cdots E_{\beta _2}^{a_2}
  E_{\beta _1}^{a_1} \,|\,a_1,\dots,a_t\in \N _0 \}
\end{align}  
is attached to $w$ in \cite[8.24]{b-Jantzen96}. It is shown in \cite{a-DCoKacPro95} that $U^+[w]$ is a subalgebra which does not depend on the reduced expression of $w$. For later purposes we recall the PBW-theorem for $U^+[w]$, see \cite[Chapter 8]{b-Jantzen96}.
\begin{prop} \label{prop:PBW+} 
The set
\[
\{ E_{\beta _1}^{n_1} E_{\beta _2}^{n_2} \cdots E_{\beta _t}^{n_t}\,|\, n_1,n_2,\dots,n_t\in \N_0
\}
\]
is a basis of $U^+[w]$.
\end{prop}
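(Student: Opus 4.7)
The plan is to prove linear independence and spanning of the claimed set separately, with both steps relying on machinery already available in \cite{b-Jantzen96}.

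For \textbf{linear independence}, I would extend the given reduced expression $s_{\al_1}\cdots s_{\al_t}$ of $w$ to a reduced expression $s_{\al_1}\cdots s_{\al_N}$ of the longest element $w_0$ of $W$, and correspondingly define $\beta_i=s_{\al_1}\cdots s_{\al_{i-1}}\al_i$ for all $i\le N$. The PBW theorem for $U^+$ in \cite[8.24]{b-Jantzen96} asserts that
$$\{E_{\beta_1}^{n_1}E_{\beta_2}^{n_2}\cdots E_{\beta_N}^{n_N}\mid n_i\in\N_0\}$$
is a basis of $U^+$. The monomials in the proposition are precisely the subset with $n_{t+1}=\cdots=n_N=0$, and are therefore linearly independent in $U^+$ and a fortiori in $U^+[w]$.

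For the \textbf{spanning} statement, the key input is the Levendorski\u{\i}--Soibelman commutation formula \cite[8.26]{b-Jantzen96}: for $1\le i<j\le t$,
$$E_{\beta_j}E_{\beta_i}-q^{-(\beta_i,\beta_j)}E_{\beta_i}E_{\beta_j}\in\kspan\bigl\{E_{\beta_{i+1}}^{a_{i+1}}\cdots E_{\beta_{j-1}}^{a_{j-1}}\mid a_k\in\N_0\bigr\}.$$
The correction terms on the right are themselves forward-ordered and involve only indices strictly between $i$ and $j$. Starting from any defining generator $E_{\beta_t}^{a_t}\cdots E_{\beta_1}^{a_1}$ of $U^+[w]$, repeatedly applying this identity to out-of-order adjacent pairs rewrites it as a linear combination of forward-ordered monomials in $E_{\beta_1},\ldots,E_{\beta_t}$. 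Combined with linear independence, this yields the claimed basis.

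The main technical issue is verifying that the iterative rewriting in the spanning step terminates. This is a standard normal-form argument of diamond-lemma type: one equips words in the $E_{\beta_i}$ with a well-ordering under which each application of the LS relation replaces a word by a sum of strictly smaller ones. A convenient measure combines the number of inversions in the index sequence with a secondary control afforded by the observation that each correction term uses only indices in the open interval $(i,j)$, so that any monomial lies in the span of forward-ordered words built from a bounded set of indices. Since this termination argument is worked out in \cite[Chapter 8]{b-Jantzen96} and \cite{a-DCoKacPro95}, no new ideas are required here.
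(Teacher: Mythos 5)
The paper does not actually prove this proposition: it is recalled from \cite[Chapter 8]{b-Jantzen96} without argument, so there is no internal proof to compare against. Your sketch reproduces the standard argument from the cited sources and is essentially sound: linear independence by extending the reduced word of $w$ to one of the longest element $w_0$ and invoking the PBW theorem for $U^+$, and spanning by Levendorski\u{\i}--Soibelman straightening, with termination controlled by the fact that the correction terms only involve indices strictly between $i$ and $j$.

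One logical point deserves tightening. Your straightening shows that every defining generator $E_{\beta_t}^{a_t}\cdots E_{\beta_1}^{a_1}$ of $U^+[w]$ lies in the span $V_f$ of the forward-ordered monomials, i.e.\ $U^+[w]\subseteq V_f$; combined with linear independence this shows the forward-ordered monomials are a basis of $V_f$, not yet of $U^+[w]$. You still need the reverse containment $V_f\subseteq U^+[w]$, and a dimension count does not settle it since a priori one only gets $\dim U^+[w]_\mu\le\dim (V_f)_\mu$ in each weight space. The fix is the same machinery run in the opposite direction: by induction on $j-i$ (the case $j-i=1$ being a pure $q$-commutation), the correction terms $E_{\beta_{i+1}}^{a_{i+1}}\cdots E_{\beta_{j-1}}^{a_{j-1}}$ can themselves be rewritten in decreasing order, so the LS relation also lets you solve for $E_{\beta_i}E_{\beta_j}$ in terms of decreasing-order monomials, giving $V_f\subseteq U^+[w]$. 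With that sentence added, the argument is complete and agrees with the proof in \cite{a-DCoKacPro95} and \cite[Chapter 8]{b-Jantzen96} to which the paper defers.
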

  Definition \eqref{eq:U+wdef} and Proposition \ref{prop:PBW+} imply the following result.
\begin{cor} \label{cor:U+wdec}
 Let $x,y\in W$ with $\ell (xy)=\ell (x)+\ell (y)$. Then
 $$U^+[xy]=U^+[x]T_x(U^+[y])=T_x(U^+[y]) U^+[x].$$
\end{cor}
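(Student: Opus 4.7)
The plan is to match the PBW bases via a compatible choice of reduced expressions. Since $\ell(xy)=\ell(x)+\ell(y)$, I would fix reduced expressions $x=s_{\alpha_1}\cdots s_{\alpha_r}$ and $y=s_{\alpha_{r+1}}\cdots s_{\alpha_t}$ with $r=\ell(x)$ and $t=\ell(x)+\ell(y)$, so that their concatenation is a reduced expression for $xy$. In particular $T_{xy}=T_xT_y$. For $1\le i\le r$ the roots $\beta_i=s_{\alpha_1}\cdots s_{\alpha_{i-1}}\alpha_i$ and the Lusztig root vectors $E_{\beta_i}$ attached to the reduced expression of $xy$ coincide with those attached to $x$. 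For $r<i\le t$, writing $\beta'_{i-r}=s_{\alpha_{r+1}}\cdots s_{\alpha_{i-1}}\alpha_i$ for the corresponding root associated with $y$, one has $\beta_i=x\beta'_{i-r}$; and factoring $T_{\alpha_1}\cdots T_{\alpha_{i-1}}=T_x\circ T_{\alpha_{r+1}}\cdots T_{\alpha_{i-1}}$ yields $E_{\beta_i}=T_x(E'_{\beta'_{i-r}})$, where $E'_{\beta'_j}$ denotes the Lusztig root vector in $U^+$ attached to the $j$-th simple factor of the chosen reduced expression of $y$.

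With these identifications in hand, Proposition~\ref{prop:PBW+} applied to $xy$ supplies a $\field$-basis of $U^+[xy]$ consisting of the monomials
\[
E_{\beta_1}^{n_1}\cdots E_{\beta_r}^{n_r}\cdot E_{\beta_{r+1}}^{n_{r+1}}\cdots E_{\beta_t}^{n_t}.
\]
The left factor lies in $U^+[x]$ by Proposition~\ref{prop:PBW+} applied to $x$, while the right factor equals $T_x\!\left(E'_{\beta'_1}{}^{n_{r+1}}\cdots E'_{\beta'_{t-r}}{}^{n_t}\right)\in T_x(U^+[y])$. This proves $U^+[xy]\subseteq U^+[x]\cdot T_x(U^+[y])$. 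For the reverse inclusion, I would observe that both $U^+[x]$ and $T_x(U^+[y])$ lie in the subalgebra $U^+[xy]$: the former trivially, and the latter because $T_x(E'_{\beta'_j})=E_{\beta_{r+j}}\in U^+[xy]$ for $1\le j\le t-r$, so that $T_x(U^+[y])$ is generated as an algebra by elements of $U^+[xy]$.

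For the second identity $U^+[xy]=T_x(U^+[y])\cdot U^+[x]$ I would rerun the same argument using the reverse-ordered spanning set from the definition \eqref{eq:U+wdef}: a monomial $E_{\beta_t}^{a_t}\cdots E_{\beta_1}^{a_1}$ splits as $(E_{\beta_t}^{a_t}\cdots E_{\beta_{r+1}}^{a_{r+1}})\cdot(E_{\beta_r}^{a_r}\cdots E_{\beta_1}^{a_1})$, with the left factor in $T_x(U^+[y])$ and the right factor in $U^+[x]$. The whole argument is essentially bookkeeping on how PBW monomials decompose when one concatenates reduced expressions; the only step requiring any care is the identification $E_{\beta_i}=T_x(E'_{\beta'_{i-r}})$ for $i>r$, and this poses no genuine obstacle once the reduced expressions are chosen compatibly.
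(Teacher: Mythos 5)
Your proof is correct and is essentially the paper's argument spelled out in detail: the paper simply observes that the corollary follows from the definition \eqref{eq:U+wdef} (reverse-ordered monomials) together with Proposition~\ref{prop:PBW+} (forward-ordered PBW basis), which is exactly the splitting of PBW monomials along the concatenated reduced expression that you carry out. The only cosmetic remark is that your reverse inclusion is immediate without invoking the subalgebra property: the products of basis elements of $U^+[x]$ and of $T_x(U^+[y])$ already span $U^+[x]T_x(U^+[y])$ and coincide with the PBW basis of $U^+[xy]$, giving equality in one step.
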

The algebras $U^+[w]$ for $w\in W$ are significant building blocks of right
coideal subalgebras of $\uqbp $. To make this statement precise, recall that
the Weyl group is a poset with respect to the
weak (or Duflo) order which we denote by $\le_R$, see \cite[Chapter 3]{b-BjornerBrenti06}. By definition, one has $v\le_R w$ for $v,w\in W$ if there exists $u\in W$ such that $w=vu$ and
$\ell(w)=\ell(v)+\ell(u)$. On the other hand, the set of right coideal
subalgebras of $U$ which contain $U^0$ is also a poset with respect to
inclusion. The following theorem, which was proved in \cite[Theorem
7.3]{a-HeckSchn09p}, now explains the role of the algebras $U^+[w]$ in the
investigation of homogeneous right coideal subalgebras of $U$.
\begin{thm}\label{thm:rcsa+} 
  The map from $W$ to the set of right coideal subalgebras of
  $\uqbp $ containing $U^0$, given by $w\mapsto U^+[w]U^0$, is
  an order preserving bijection.
\end{thm}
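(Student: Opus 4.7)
My plan is to split the proof into three parts, with most of the work going into the surjectivity.

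\emph{Well-definedness, order preservation, and injectivity.} That $U^+[w]U^0$ is a subalgebra stable under $U^0$ follows from Proposition \ref{prop:PBW+} together with the fact that each $E_{\beta _i}$ is a weight vector. For the right coideal property $\kow (U^+[w]U^0)\subseteq U^+[w]U^0\ot U$, I would induct on $\ell (w)$ using a reduced expression $w=s_{\al _1}\cdots s_{\al _t}$: the generator $E_{\al _1}$ is handled by \eqref{eq:UHopf1}, and for $i>1$ the standard coproduct formulas for Lusztig root vectors yield
\[
  \kow (E_{\beta _i})\in U^+[s_{\al _1}\cdots s_{\al _i}]U^0\ot U^+,
\]
whose left factor lies in $U^+[w]U^0$ by Corollary \ref{cor:U+wdec}. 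Order preservation is then immediate from the same corollary: if $v\le _R w$, write $w=vu$ with $\ell (w)=\ell (v)+\ell (u)$ and obtain $U^+[w]=U^+[v]T_v(U^+[u])\supseteq U^+[v]$. For injectivity, Proposition \ref{prop:PBW+} shows that the multi-graded Hilbert series of $U^+[w]$ equals $\prod _{\beta \in \Phi ^+(w)}(1-x^\beta )^{-1}$, which uniquely determines $\Phi ^+(w)$, and hence determines $w$.

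\emph{Surjectivity.} Let $C\subseteq \uqbp $ be a homogeneous right coideal subalgebra. By the triangular decomposition of $\uqbp $ it suffices to find $w\in W$ with $C\cap U^+=U^+[w]$. I would argue by a well-founded induction: if $C\cap U^+=\field $ then $w=e$, otherwise I pick a weight vector $x\in C\cap U^+$ of minimal positive weight $\beta $. The condition $\kow (x)\in C\ot U$, combined with \eqref{eq:UHopf1} and the minimality of $\beta $, forces $\beta \in \Pi $ and $x\in \field E_\beta $, so $E_\al \in C$ for some $\al \in \Pi $. One then constructs a strictly smaller homogeneous right coideal subalgebra $C'$ of $\uqbp $ by ``reflecting out'' the contribution of $E_\al $ from the PBW data of $C$; by induction $C'=U^+[w']U^0$ with $\ell (s_\al w')=1+\ell (w')$, and reassembly yields $C=U^+[s_\al w']U^0$.

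The main obstacle is precisely this reflection step in surjectivity. The Lusztig automorphism $T_\al $ does not preserve $\uqbp $, so the ``$s_\al $-reflection'' of $C$ must be defined intrinsically, and showing that the result is again a right coideal subalgebra of $\uqbp $ is non-trivial. In \cite{a-HeckSchn09p} this is executed through the Weyl groupoid of the Nichols algebra $U^+$, which provides the categorical framework for reflection functors on coideal subalgebras; this machinery does not follow from the $\uqg $-level formulas alone.
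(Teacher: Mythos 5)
First, a point of reference: the paper offers no proof of Theorem \ref{thm:rcsa+} at all --- it is quoted from \cite[Theorem 7.3]{a-HeckSchn09p} --- so the only question is whether your sketch could stand on its own. The parts you call easy are essentially fine. Well-definedness does come down to $\kow (U^+[w])\subseteq U^+[w]U^0\ot U^+$, and your inductive scheme is the right one, though the containment $\kow (E_{\beta _i})\in U^+[s_{\al _1}\cdots s_{\al _i}]U^0\ot U^+$ is itself a Levendorskii--Soibelman-type statement requiring proof, not a consequence of \eqref{eq:UHopf1} alone. One direction of order preservation follows from Corollary \ref{cor:U+wdec} as you say, and injectivity from Proposition \ref{prop:PBW+} via recovery of $\Phi ^+(w)$. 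Your first move in surjectivity is also sound: minimality of $\beta $ forces $x$ to be skew-primitive, and since $U^+$ is a Nichols algebra for $q$ not a root of unity, its primitives sit in degree one, whence $E_\al \in C$ for some $\al \in \Pi $.

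The genuine gap is the reflection step, and you have named it yourself. ``Reflecting out the contribution of $E_\al $ from the PBW data of $C$'' is not a construction: you must (i) define $C'$ intrinsically inside $\uqbp $, since $T_\al $ does not preserve $\uqbp $ and one cannot simply apply $T_\al ^{-1}$ to $C$; (ii) prove that $C'$ is again a homogeneous right coideal subalgebra; (iii) exhibit a well-founded quantity that strictly decreases from $C$ to $C'$; and (iv) justify the reassembly identity $C=U^+[s_\al w']U^0$. Steps (i)--(iv) are exactly the content of the reflection theory (the Weyl groupoid of the Nichols algebra $U^+$) developed in \cite{a-HeckSchn09p} to prove this very theorem. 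As written, your argument therefore reduces the theorem to itself: it is an accurate roadmap of where the difficulty lies, and it matches how the result is actually established, but it is not a proof.
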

As a first application of the above theorem we prove a Lemma which will be repeatedly used in Section \ref{sec:compatible}.
\begin{lem} \label{lem:EUw}
  Let $w\in W$ and $\alpha \in \Pi $.

  \begin{enumerate}
    \item The following are equivalent.
      \begin{enumerate}
        \item $E_\alpha \in U^+[w]U^0$,
        \item $\ell (s_\alpha w)=\ell (w)-1$,
        \item $\alpha \in \Phi ^+(w)$.
      \end{enumerate}
    \item Assume that $\ell (s_\alpha w)=\ell (w)+1$. The following are
      equivalent.
      \begin{enumerate}
        \item The subspace $\field \langle E_\alpha \rangle U^+[w]U^0$
          of $\uqbp $ is a subalgebra,
        \item $\alpha \in \Pi \cap w\Pi $,
        \item There exists $\beta \in \Pi $ such that $s_\alpha w=ws_\beta $.
      \end{enumerate}
  \end{enumerate}
\end{lem}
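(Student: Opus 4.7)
The plan is to deduce part (1) from Theorem~\ref{thm:rcsa+} together with standard Weyl group combinatorics, and part (2) from Corollary~\ref{cor:U+wdec} combined with a PBW dimension count.

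For part (1), the equivalence (b)$\Leftrightarrow$(c) is the standard fact that for $\alpha\in\Pi$ one has $\ell(s_\alpha w)=\ell(w)-1$ iff $w^{-1}\alpha<0$ iff $\alpha\in\Phi^+(w)$. For (a)$\Leftrightarrow$(c), I would observe that $\field\langle E_\alpha\rangle=\field[E_\alpha]=U^+[s_\alpha]$, so the condition $E_\alpha\in U^+[w]U^0$ amounts to $U^+[s_\alpha]U^0\subseteq U^+[w]U^0$. By Theorem~\ref{thm:rcsa+} this inclusion is equivalent to $s_\alpha\le_R w$, hence to (b).

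For part (2), the equivalence (b)$\Leftrightarrow$(c) reduces to the identity $s_{w\beta}=ws_\beta w^{-1}$: if $s_\alpha w=ws_\beta$ then $w\beta=\pm\alpha$, and the minus sign would force $\alpha\in\Phi^+(w)$, contradicting $\ell(s_\alpha w)=\ell(w)+1$ via part (1). For (c)$\Rightarrow$(a), set $w'=s_\alpha w=ws_\beta$; since $\ell(w')=\ell(w)+1$, Corollary~\ref{cor:U+wdec} gives $U^+[w']=U^+[w]\,T_w(U^+[s_\beta])=T_w(U^+[s_\beta])\,U^+[w]$. Appending $s_\beta$ to a reduced expression of $w$ shows that the corresponding Lusztig root vector is $T_w(E_\beta)$, which has weight $w\beta=\alpha\in\Pi$ and therefore coincides with the standard generator $E_\alpha$ by \cite[8.20]{b-Jantzen96}. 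Hence $T_w(U^+[s_\beta])=\field\langle E_\alpha\rangle$, and $\field\langle E_\alpha\rangle U^+[w]U^0=U^+[w']U^0$ is the required right coideal subalgebra.

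The main work is (a)$\Rightarrow$(c). Under (a), the subspace $C=\field\langle E_\alpha\rangle U^+[w]U^0$ is a subalgebra by hypothesis and a right coideal, since $\kow(E_\alpha)=E_\alpha\ot 1+K_\alpha\ot E_\alpha\in C\ot U$ and $U^+[w]U^0$ is a right coideal. Theorem~\ref{thm:rcsa+} then yields $C=U^+[w']U^0$ for some $w'\in W$, and $U^+[w]\subseteq C$ together with $E_\alpha\in C$ give $w\le_R w'$ and $\alpha\in\Phi^+(w')$ via part (1). The key step is to upgrade this to $\ell(w')=\ell(w)+1$. For this I would compare $Q$-graded characters of both descriptions of $C\cap U^+$. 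Proposition~\ref{prop:PBW+} gives
\[
\Char(U^+[w'])=\prod_{\gamma\in\Phi^+(w')}(1-e^\gamma)^{-1},
\]
while the spanning set $\{E_\alpha^{n_0}E_{\beta_1}^{n_1}\cdots E_{\beta_t}^{n_t}\}$ of $\field\langle E_\alpha\rangle U^+[w]$ yields the coefficientwise upper bound $(1-e^\alpha)^{-1}\prod_{\beta\in\Phi^+(w)}(1-e^\beta)^{-1}$. Since $\Phi^+(w)\subseteq\Phi^+(w')$, cancelling the common factor reduces the comparison to
\[
\prod_{\gamma\in\Phi^+(w')\setminus\Phi^+(w)}(1-e^\gamma)^{-1}\le(1-e^\alpha)^{-1},
\]
whose right-hand side is supported on $\N_0\alpha$. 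Consequently every $\gamma$ in $\Phi^+(w')\setminus\Phi^+(w)$ must be a positive integral multiple of $\alpha$, which forces $\gamma=\alpha$ since no other positive root has this property. Hence $\Phi^+(w')\setminus\Phi^+(w)=\{\alpha\}$, so $\ell(w')=\ell(w)+1$ and $w'=ws_\beta$ for some $\beta\in\Pi$ with $w\beta=\alpha$, giving (c). The subtle point is ensuring that the spanning-set computation really delivers an upper bound on the character, so that combined with the PBW equality for $U^+[w']$ it pins down $\Phi^+(w')\setminus\Phi^+(w)$ to the single element $\alpha$.
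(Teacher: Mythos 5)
Your route is essentially the paper's: part (1) via Theorem~\ref{thm:rcsa+} (the paper instead argues directly from $\Phi^+(s_\alpha w)=\{\alpha\}\cup s_\alpha\Phi^+(w)$ and the $\Z\Pi$-grading of $U^+[w]U^0$, but your version is consistent with how the paper itself reads Theorem~\ref{thm:rcsa+}, namely as an order isomorphism), and in part (2) the steps (b)$\Leftrightarrow$(c), (c)$\Rightarrow$(a) via Corollary~\ref{cor:U+wdec} and \cite[8.20]{b-Jantzen96}, and the identification $\field\langle E_\alpha\rangle U^+[w]U^0=U^+[w']U^0$ with $w\rwo w'$ all match the paper. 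The one step that does not hold as written is the ``cancellation of the common factor'' in the character inequality. A coefficientwise inequality $PQ\le RQ$ between power series with nonnegative coefficients does \emph{not} imply $P\le R$, because $Q^{-1}$ has coefficients of both signs; for instance $(1+2z^2)(1-z)^{-1}\le(1-z)^{-1}\cdot(1-z)^{-1}$ coefficientwise, yet $1+2z^2\not\le(1-z)^{-1}$. So dividing by $\prod_{\beta\in\Phi^+(w)}(1-e^\beta)^{-1}$ to reduce to $\prod_{\gamma\in\Phi^+(w')\setminus\Phi^+(w)}(1-e^\gamma)^{-1}\le(1-e^\alpha)^{-1}$ is unjustified, and the support argument you build on it collapses. (The point you flag as subtle --- that a spanning set gives a coefficientwise upper bound on the graded dimension --- is in fact unproblematic.)

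The dimension count can be repaired. Specialize $e^\gamma\mapsto z^{\hght(\gamma)}$; since all coefficients are nonnegative this turns your character inequality into a coefficientwise inequality of one-variable series,
\begin{align*}
\prod_{\gamma\in\Phi^+(w')}\bigl(1-z^{\hght(\gamma)}\bigr)^{-1}\le (1-z)^{-1}\prod_{\beta\in\Phi^+(w)}\bigl(1-z^{\hght(\beta)}\bigr)^{-1},
\end{align*}
hence an inequality of positive functions on $(0,1)$. Comparing the orders of the poles at $z=1$ gives $\ell(w')\le\ell(w)+1$. On the other hand $E_\alpha\in U^+[w']U^0$ while $E_\alpha\notin U^+[w]U^0$ by part (1), so $w'\ne w$ and $w\rwo w'$ forces $\ell(w')=\ell(w)+1$. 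Thus $w'=ws_\beta$ with $\beta\in\Pi$, and $\Phi^+(w')=\Phi^+(w)\cup\{w\beta\}$ together with $\alpha\in\Phi^+(w')\setminus\Phi^+(w)$ yields $w\beta=\alpha$. This is exactly the computation the paper compresses into ``Proposition~\ref{prop:PBW+} yields now that $v=ws_\beta$''; with this repair your argument is complete.
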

\begin{proof}
  (1) Property (b) implies (a) as $U^+[w]$ is independent of the chosen reduced expression for $w$. Moreover (b) implies (c) by definition of $\Phi ^+(w)$. If $\ell (s_\alpha w)=\ell (w)+1$, on the other  hand, then
  $\Phi ^+(s_\alpha w)=\{\alpha \}\cup s_\alpha (\Phi ^+(w))$.
  Since $\Phi ^+(s_\alpha w)\subseteq \Phi ^+$, we conclude that
  $\alpha \notin \Phi ^+(w)$. Consequently, $E_\alpha \notin U^+[w]U^0$
  since $U^+[w]U^0$ is $\Z \Pi $-graded.

  (2) Clearly, (b) is equivalent to (c) with $\beta =w^{-1}\alpha $,
  and (c) implies (a) by Corollary~\ref{cor:U+wdec} since $w\beta =\alpha $.
  Assume now that (a) holds. By Theorem~\ref{thm:rcsa+},
  since $\field \langle E_\alpha \rangle U^0$ is a right coideal of
  $\uqbp $, there exists $v\in W$ such that
  $\field \langle E_\alpha \rangle U^+[w]U^0=U^+[v]U^0$.
  Theorem~\ref{thm:rcsa+} and $U^+[w]U^0\subseteq U^+[v]U^0$ imply that
  $w\rwo v$. Proposition~\ref{prop:PBW+} yields now that $v=ws_\beta $
  with $\beta \in \Pi $, $w\beta =\alpha $, and hence (b) holds.
\end{proof}
\subsection{Homogeneous right coideal subalgebras of $U^{\le 0}$}
As a consequence of Theorem \ref{thm:rcsa+} we can immediately describe all homogeneous right coideal subalgebras of $U^{\le 0}$. To this end consider the algebra automorphism $\omega$ of $U$ given by
\begin{align*}
  \omega(E_\alpha )=F_\alpha ,\quad \omega(F_\alpha )=E_\alpha ,\quad \omega(K_\alpha )=K_\alpha ^{-1}
\end{align*}
for all $\alpha \in \Phi $. Following \cite[8.24]{b-Jantzen96} one defines $U^-[w]=\omega (U^+[w])$.
Since $\omega$ is a coalgebra antiautomorphism, one concludes that $U^-[w]U^0=\omega(U^+[w]U^0)$ is a left coideal subalgebra of $U$. To obtain a right coideal subalgebra we consider $S(U^-[w]U^0)$. Theorem \ref{thm:rcsa+} implies the following result.
  \begin{cor}\label{cor:rcsa-}
    Any right coideal subalgebra of $U^{\le 0}$ which contains $U^0$ is of the form $S(U^-[w])U^0$ for some $w\in W$.
  \end{cor}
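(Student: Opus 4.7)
The plan is to transfer the classification of Theorem~\ref{thm:rcsa+} from $\uqbp$ to $U^{\le 0}$ via the composition $\psi := S\circ\omega$. Since $S$ is both an algebra and a coalgebra antiautomorphism while $\omega$ is an algebra automorphism and a coalgebra antiautomorphism, the composition $\psi$ is an algebra antiautomorphism and a coalgebra automorphism. A direct check on the generators $E_\alpha$, $F_\alpha$, $K_\alpha^{\pm 1}$ gives $\psi(E_\alpha)=-F_\alpha K_\alpha$, $\psi(F_\alpha)=-K_\alpha^{-1}E_\alpha$, and $\psi(K_\alpha)=K_\alpha$; in particular $\psi$ interchanges $\uqbp$ and $U^{\le 0}$, fixes $U^0$ elementwise, and satisfies $\psi^2=\id$.

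Because $\psi$ is an algebra antihomomorphism it sends subalgebras to subalgebras, and the coalgebra homomorphism identity $\kow\circ\psi=(\psi\otimes\psi)\circ\kow$ shows that it preserves the property of being a right coideal. Combined with $\psi(U^0)=U^0$, this yields a bijection between the homogeneous right coideal subalgebras of $\uqbp$ and those of $U^{\le 0}$. Given $C$ as in the statement, Theorem~\ref{thm:rcsa+} then furnishes $w\in W$ with $\psi(C)=U^+[w]U^0$, whence
$$C=\psi(U^+[w]U^0)=S(\omega(U^+[w]U^0))=S(U^-[w]U^0).$$

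It remains to rewrite $S(U^-[w]U^0)$ in the form $S(U^-[w])U^0$. Since $S$ is an algebra antihomomorphism with $S(U^0)=U^0$, one has $S(U^-[w]U^0)=U^0\cdot S(U^-[w])$. As $S$ maps $U_\beta$ into $U_{-\beta}$, the subspace $S(U^-[w])$ is spanned by weight vectors, and the commutation relation $K_\alpha x=q^{(\alpha,\beta)}xK_\alpha$ for $x\in U_\beta$ gives $U^0\cdot S(U^-[w])=S(U^-[w])\cdot U^0$, completing the argument. The only substantive ingredient here is Theorem~\ref{thm:rcsa+}; the remainder is bookkeeping on the interplay of $S$ and $\omega$ with the triangular decomposition, and the only point worth more than a line of checking is that the combined map $\psi$ has the right Hopf-theoretic type (antialgebra, coalgebra) to carry homogeneous right coideal subalgebras to homogeneous right coideal subalgebras.
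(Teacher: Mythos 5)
Your proof is correct and follows essentially the same route as the paper: the paper also obtains the corollary by transporting Theorem~\ref{thm:rcsa+} through $\omega$ (an algebra automorphism and coalgebra antiautomorphism sending $U^+[w]$ to $U^-[w]$) followed by the antipode $S$ to turn the resulting left coideal subalgebras back into right coideal subalgebras, which is exactly your map $\psi=S\circ\omega$. Your verification of the Hopf-theoretic type of $\psi$, of $\psi^2=\id$, and of the rewriting $S(U^-[w]U^0)=S(U^-[w])U^0$ just makes explicit the details the paper leaves to the reader.
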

In the remainder of this subsection we use the PBW-theorem for $U^+[w]$ and properties of the Lusztig automorphisms to obtain a suitable PBW-basis of $S(U^-[w])U^0$. By \cite[Eq.~8.14(9)]{b-Jantzen96} one has
\begin{align}
  T_\alpha (\omega (u))\in \field ^\times \omega (T_\alpha (u))
  \label{eq:Tomega}
\end{align}
for all $\alpha \in \Pi $ and all weight vectors $u\in U^+$, where $\field ^\times =\field \setminus \{0\}$. In view of Proposition \ref{prop:PBW+} this implies that $U^-[w]$ has a PBW-basis
\begin{align}\label{eq:Uminus-PBW}
  \{F_{\beta _1}^{n_1} F_{\beta _2}^{n_2} \dots F_{\beta _t}^{n_t}\,|\,n_1,\dots,n_t\in \N _0\}
\end{align}
where $F_{\beta _i}=T_{\alpha _1}\cdots T_{\alpha _{i-1}}F_{\alpha _i}$ for all
$i\in \{1,2,\dots,t\}$.
The following lemma will allow us to describe a PBW-basis of $S(U^-[w])U^0$ without explicit use of the antipode.
\begin{lem}\label{lem:S-rewrite}
  For any $w\in W$ one has
  \begin{align*}
    T_w^{-1} \left(U^+[w] U^0\right) = S(U^-[w^{-1}])U^0.
  \end{align*}
\end{lem}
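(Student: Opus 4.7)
The plan is to reduce the identity to a comparison of algebra generators. By Proposition~\ref{prop:PBW+}, $U^+[w]$ is generated as an algebra by $E_{\beta_1},\ldots,E_{\beta_t}$, and hence $U^+[w]U^0$ is generated by $U^0$ together with these elements. Since $T_w^{-1}$ is an algebra automorphism of $U$ preserving $U^0$, the subalgebra $T_w^{-1}(U^+[w]U^0)$ is generated by $U^0$ and the elements $T_w^{-1}(E_{\beta_i})$. Similarly, I would fix the reduced expression $w^{-1}=s_{\alpha_t}\cdots s_{\alpha_1}$ with associated positive roots $\gamma_j=s_{\alpha_t}\cdots s_{\alpha_{t-j+2}}\alpha_{t-j+1}$ and root vectors $F_{\gamma_j}=T_{\alpha_t}\cdots T_{\alpha_{t-j+2}}(F_{\alpha_{t-j+1}})$; by \eqref{eq:Uminus-PBW}, $U^-[w^{-1}]$ is generated by $F_{\gamma_1},\ldots,F_{\gamma_t}$, and since $S$ is an algebra antiautomorphism, $S(U^-[w^{-1}])U^0$ is generated by $U^0$ together with the $S(F_{\gamma_j})$. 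Thus it is enough to show that $T_w^{-1}(E_{\beta_i})$ and $S(F_{\gamma_{t-i+1}})$ agree up to a nonzero scalar and elements of $U^0$.

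To establish this I would set $v_i=s_{\alpha_t}\cdots s_{\alpha_{i+1}}$, so that $\gamma_{t-i+1}=v_i\alpha_i$ and $F_{\gamma_{t-i+1}}=T_{v_i}(F_{\alpha_i})$. Expanding $E_{\beta_i}=T_{\alpha_1}\cdots T_{\alpha_{i-1}}(E_{\alpha_i})$ and $T_w^{-1}=T_{\alpha_t}^{-1}\cdots T_{\alpha_1}^{-1}$, most factors cancel, yielding
\begin{align*}
T_w^{-1}(E_{\beta_i})=T_{v_i^{-1}}^{-1}\bigl(T_{\alpha_i}^{-1}(E_{\alpha_i})\bigr)=-K_{\gamma_{t-i+1}}^{-1}\,T_{v_i^{-1}}^{-1}(F_{\alpha_i}),
\end{align*}
where I use the standard formulas $T_{\alpha_i}^{-1}(E_{\alpha_i})=-K_{\alpha_i}^{-1}F_{\alpha_i}$ and $T_\alpha^{-1}(K_\beta)=K_{s_\alpha\beta}$ from \cite[8.14]{b-Jantzen96}. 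Next I would apply identity \eqref{eq:Tw-1} to convert $T_{v_i^{-1}}^{-1}=\tau T_{v_i}\tau$ and exploit $\tau(F_{\alpha_i})=F_{\alpha_i}$ to get $T_{v_i^{-1}}^{-1}(F_{\alpha_i})=\tau(F_{\gamma_{t-i+1}})$. Lemma~\ref{le:S-tau} then gives $\tau(F_{\gamma_{t-i+1}})=c(\gamma_{t-i+1})\,S(F_{\gamma_{t-i+1}})\,K_{-\gamma_{t-i+1}}$, and substituting back exhibits $T_w^{-1}(E_{\beta_i})$ as a nonzero scalar multiple of $S(F_{\gamma_{t-i+1}})$ sandwiched between elements of $U^0$. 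Moving all $K$-factors to one side using the weight of $S(F_{\gamma_{t-i+1}})$ produces the required equality modulo $U^0$.

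The main obstacle is the mismatch between the operator $T_{v_i^{-1}}^{-1}$ produced by the cancellation on the left-hand side and the operator $T_{v_i}$ used to define the PBW generator $F_{\gamma_{t-i+1}}$ on the right. This is precisely what the $\tau$-identity \eqref{eq:Tw-1} is designed to repair: since $\tau$ is an antiautomorphism fixing the $F_\alpha$, it interchanges $T_v^{-1}$ and $T_{v^{-1}}$ on these generators and so bridges the two Lusztig conventions. Once this bridge is built, the passage from $\tau$ to $S$ via Lemma~\ref{le:S-tau} is a routine manipulation of commuting $K$-factors with weight vectors, and the two sides of the claimed equality have identical algebra-generator sets modulo $U^0$.
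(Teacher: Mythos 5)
Your argument is correct, and it rests on exactly the same two pillars as the paper's proof: the identity \eqref{eq:Tw-1} relating $T_{v}^{-1}$ and $T_{v^{-1}}$ via $\tau$, and Lemma~\ref{le:S-tau} converting $\tau$ into $S$ up to a scalar and a $K$-factor. The difference is purely organizational. The paper proves the statement by induction on $\ell(w)$, peeling off one simple reflection $w=s_\alpha v$ per step and reducing to the single identity $T_v^{-1}F_\alpha\in S(T_{v^{-1}}F_\alpha)U^0$; you instead unroll that induction into a closed-form telescoping computation of every $T_w^{-1}(E_{\beta_i})$ at once (your formula $T_w^{-1}(E_{\beta_i})=-K_{\gamma_{t-i+1}}^{-1}T_{v_i^{-1}}^{-1}(F_{\alpha_i})$ is in fact the same computation the paper performs later inside the proof of Corollary~\ref{cor:PBW}), and then applies the $\tau$-trick to each generator. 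What your version buys is an explicit description of all PBW generators of $T_w^{-1}(U^+[w]U^0)$ in one pass, essentially merging Lemma~\ref{lem:S-rewrite} with Corollary~\ref{cor:PBW}; what it costs is that the reduction ``it is enough to compare generators modulo $U^0$'' silently uses that $U^+[w]$ is a subalgebra (so that Proposition~\ref{prop:PBW+} really does exhibit a generating set, and likewise for $S(U^-[w^{-1}])U^0$), a fact the paper imports from De Concini--Kac--Procesi and which you should cite explicitly at that point. With that reference added, the proof is complete.
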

\begin{proof}
 We perform induction on the length $\ell (w)$ of $w$. If $\ell (w)=0$ then both sides of the above
 expression coincide with $U^0$. Now assume that there exists $\al \in \Pi$ such that $w=s_\alpha v$ with $\ell (w)=\ell (v)+1$. Then one calculates
 \begin{align*}
   T_w^{-1}\left(U^+[w]U^0\right)=&\;
   (T_\alpha T_v)^{-1}\left( \field \langle E_\alpha \rangle T_\alpha (U^+[v]) U^0\right)\\
    =&\;T_v^{-1}\left(\field \langle F_\alpha \rangle U^+[v]U^0\right).
 \end{align*} 
 By induction hypothesis this implies the relation
 \begin{align}\label{eq:stepone}
    T_w^{-1}\left(U^+[w]U^0\right)&=
    \field \langle T_v^{-1}F_\alpha \rangle S(U^-[v^{-1}])U^0.
 \end{align}
 In view of the PBW-basis of $U^-[w^{-1}]$, see \eqref{eq:Uminus-PBW},
 it remains to show that $T_v^{-1}F_\alpha \in S (T_{v^{-1}}F_\alpha )U^0$.
 Now, using \eqref{eq:Tw-1}, we obtain that
\begin{align*}
  T_v^{-1}F_\alpha =&\;\tau \circ T_{v^{-1}}\circ \tau(F_\alpha )\\
  =&\;\tau \circ T_{v^{-1}}F_\alpha \in S(T_{v^{-1}}F_\alpha )U^0,
\end{align*}
where the last relation holds by Lemma~\ref{le:S-tau}.
This proves the lemma.
\end{proof}
Recall that we have fixed a reduced expression $w=s_{\al_1} s_{\al_2} \dots
s_{\al_t}$ of the element $w\in W$. We can now verify the desired PBW-theorem.
\begin{cor}\label{cor:PBW}
  The set
  \begin{align*}
    \{(T_{\al_1}^{-1} \dots T_{\al_{t-1}}^{-1}F_{\al_t})^{n_t} \,\dots\,(T_{\al_1}^{-1}F_{\al_2})^{n_2}     F_{\al_1}^{n_1} K_\mu \,|\, n_1,n_2,\dots,n_t\in \N_0, \mu\in Q\}
  \end{align*}
  is a basis of $S(U^-[w])U^0$.
\end{cor}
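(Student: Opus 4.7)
My plan is to convert Lemma~\ref{lem:S-rewrite} into the desired PBW statement by transporting a PBW basis of $U^+[w^{-1}]U^0$ through the Lusztig automorphism $T_{w^{-1}}^{-1}$. Replacing $w$ by $w^{-1}$ in Lemma~\ref{lem:S-rewrite} yields
$$S(U^-[w])U^0 = T_{w^{-1}}^{-1}\bigl(U^+[w^{-1}]U^0\bigr),$$
so it suffices to describe the image of a suitable basis. I reverse the fixed reduced expression of $w$ to obtain the reduced expression $w^{-1}=s_{\al_t}s_{\al_{t-1}}\cdots s_{\al_1}$; Proposition~\ref{prop:PBW+} applied to this expression gives the basis $\{(E'_1)^{m_1}(E'_2)^{m_2}\cdots (E'_t)^{m_t}K_\mu\,|\,m_i\in \N_0,\ \mu\in Q\}$ of $U^+[w^{-1}]U^0$, with root vectors $E'_j = T_{\al_t}T_{\al_{t-1}}\cdots T_{\al_{t-j+2}}E_{\al_{t-j+1}}$.

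Since $T_{w^{-1}}^{-1} = T_{\al_1}^{-1} T_{\al_2}^{-1}\cdots T_{\al_t}^{-1}$, the cascading cancellations $T_{\al_i}^{-1}T_{\al_i}=\id$ collapse the product so that, upon setting $k:=t-j+1$,
$$T_{w^{-1}}^{-1}(E'_j) = T_{\al_1}^{-1}T_{\al_2}^{-1}\cdots T_{\al_k}^{-1}E_{\al_k}.$$
To turn this into an $F$-vector I combine the identity $T_\al^{-1}=\tau\circ T_\al\circ\tau$ from \eqref{eq:Tw-1} with $T_\al(E_\al)=-F_\al K_\al$, which gives $T_{\al_k}^{-1}(E_{\al_k})\in\field^\times F_{\al_k}U^0$. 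Each $T_{\al_i}^{-1}$ preserves $U^0$ (acting as $K_\nu\mapsto K_{s_{\al_i}\nu}$), so I pull the stray $U^0$-factor out through $T_{\al_1}^{-1}\cdots T_{\al_{k-1}}^{-1}$ and absorb it, together with the image $T_{w^{-1}}^{-1}(K_\mu)=K_{w\mu}$, into a single arbitrary element of $U^0$.

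As $j$ runs through $1,2,\ldots,t$ the index $k=t-j+1$ runs through $t,t-1,\ldots,1$, so the ordered source monomial $(E'_1)^{m_1}(E'_2)^{m_2}\cdots (E'_t)^{m_t}K_\mu$ is mapped, after relabeling $n_k:=m_{t-k+1}$, to a nonzero scalar multiple of the monomial listed in the statement. Linear independence of the image set is automatic because $T_{w^{-1}}^{-1}$ is a bijective algebra automorphism and the source set is a basis by Proposition~\ref{prop:PBW+}; spanning is equally immediate. The main bookkeeping obstacle I foresee is tracking the scalars and $K$-factors produced when rewriting each $T_{\al_k}^{-1}(E_{\al_k})$ in terms of $F_{\al_k}$ and then commuting weight vectors past one another to collect all $U^0$-factors on the right, but this is routine since the root lattice $Q$ is preserved by the Lusztig automorphisms and weight vectors commute with elements of $U^0$ up to scalars.
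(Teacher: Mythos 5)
Your proposal is correct and follows essentially the same route as the paper: both transport the PBW basis of a positive part through the inverse Lusztig automorphism using Lemma~\ref{lem:S-rewrite}, reduce $T_{\al}^{-1}(E_\al)$ to $F_\al$ modulo $U^0$, and absorb the resulting Cartan factors. The only difference is cosmetic --- you substitute $w\mapsto w^{-1}$ at the start and work with the reversed reduced expression, whereas the paper performs the computation for $w$ and relabels at the end.
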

\begin{proof}
Observe first that 
\begin{align*}
  T_w^{-1}(T_{\al_1} T_{\al_2}\dots T_{\al_{j-1}}E_{\al _j})
  =- T_{\al_t}^{-1} T_{\al_{t-1}}^{-1}\dots T_{\al_{j+1}}^{-1}
  (K_{\al _j}^{-1}F_{\al _j}).
\end{align*}
 Hence, in view of the PBW-basis of $U^+[w]$, Lemma \ref{lem:S-rewrite} implies that the elements
 \begin{align*}
   (T_{\al_t}^{-1} \dots T_{\al_{2}}^{-1}F_{\al_1})^{n_t}\, \dots\,  (T_{\al_t}^{-1}F_{\al_{t-1}})^{n_2} \,F_{\al_t}^{n_1} K_\mu
 \end{align*}
  for $(n_1,\dots,n_t)\in \N_0^t$ and $\mu\in Q$ form a basis of
  $S(U^-[w^{-1}])U^0$. Now replace $w$ by $w^{-1}$ to obtain the claim of the
  corollary.
\end{proof}
  Lemma \ref{lem:S-rewrite} moreover allows us to translate Corollary \ref{cor:U+wdec} into the setting of $U^-$.
\begin{cor} \label{cor:SU-wdec}
  Let $x,y\in W$ with $\ell (xy)=\ell (x)+\ell (y)$. Then
  $$S(U^-[xy])U^0=\big(T^{-1}_{x^{-1}}S(U^-[y])\big)S(U^-[x])U^0=S(U^-[x]) \big(T^{-1}_{x^{-1}}S(U^-[y])\big)U^0  .$$
\end{cor}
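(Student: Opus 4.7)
The plan is to deduce this corollary from the corresponding $U^+$-statement, Corollary \ref{cor:U+wdec}, by transporting everything through the intertwining identity of Lemma \ref{lem:S-rewrite}. The starting observation is that under the length hypothesis one has $\ell(y^{-1}x^{-1}) = \ell((xy)^{-1}) = \ell(xy) = \ell(x)+\ell(y) = \ell(x^{-1})+\ell(y^{-1})$, so Lemma \ref{lem:S-rewrite} applied with $w = y^{-1}x^{-1}$ gives
$$S(U^-[xy])U^0 = T_{y^{-1}x^{-1}}^{-1}\bigl(U^+[y^{-1}x^{-1}]U^0\bigr),$$
and Corollary \ref{cor:U+wdec} applied to the pair $(y^{-1},x^{-1})$ expresses $U^+[y^{-1}x^{-1}]$ in either of the forms $U^+[y^{-1}]\,T_{y^{-1}}(U^+[x^{-1}])$ or $T_{y^{-1}}(U^+[x^{-1}])\,U^+[y^{-1}]$, which will ultimately produce the two claimed factorisations.

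Next I will apply $T_{y^{-1}x^{-1}}^{-1} = T_{x^{-1}}^{-1}T_{y^{-1}}^{-1}$ factor by factor. The $T_{y^{-1}}$ inside $T_{y^{-1}}(U^+[x^{-1}])$ cancels, leaving $T_{x^{-1}}^{-1}(U^+[x^{-1}])$, which Lemma \ref{lem:S-rewrite} identifies (up to $U^0$) with $S(U^-[x])$. The factor $U^+[y^{-1}]$ becomes $T_{x^{-1}}^{-1}T_{y^{-1}}^{-1}(U^+[y^{-1}])$; applying Lemma \ref{lem:S-rewrite} inside the outer $T_{x^{-1}}^{-1}$ rewrites this, modulo $U^0$, as $T_{x^{-1}}^{-1}(S(U^-[y]))$. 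Both reductions use crucially that each Lusztig automorphism stabilises $U^0$, so that $T_w^{-1}(XU^0) = T_w^{-1}(X)U^0$ for any subspace $X$.

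The main bookkeeping task, and the only real obstacle, is the careful handling of the $U^0$-factors: Lemma \ref{lem:S-rewrite} only gives the building blocks modulo right-multiplication by $U^0$, so one must absorb, move, and reintroduce these factors between the weight-graded blocks. For any weight-graded subspace $V\subseteq U$ one has $U^0 V = VU^0$, which combined with $T_w(U^0)=U^0$ allows a clean identification
$$T_{x^{-1}}^{-1}T_{y^{-1}}^{-1}(U^+[y^{-1}])\cdot T_{x^{-1}}^{-1}(U^+[x^{-1}])\cdot U^0 = T_{x^{-1}}^{-1}(S(U^-[y]))\cdot S(U^-[x])\,U^0,$$
giving the first equality. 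Running the same argument on the opposite-order decomposition $U^+[y^{-1}x^{-1}] = T_{y^{-1}}(U^+[x^{-1}])U^+[y^{-1}]$ yields the second equality. I do not expect anything substantive beyond this $U^0$-juggling.
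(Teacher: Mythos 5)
Your proof is correct and follows exactly the route the paper intends: the paper states this corollary with no written proof beyond the remark that Lemma \ref{lem:S-rewrite} translates Corollary \ref{cor:U+wdec} to the $U^-$ side, and your argument (applying $T^{-1}_{y^{-1}x^{-1}}=T^{-1}_{x^{-1}}T^{-1}_{y^{-1}}$ to the two decompositions of $U^+[y^{-1}x^{-1}]$ and absorbing the $U^0$-factors via $T_w(U^0)=U^0$ and $U^0V=VU^0$ for weight-graded $V$) is precisely that translation, carried out with the correct length bookkeeping.
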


\section{Compatible pairs of coideal subalgebras}\label{sec:compatible}
\subsection{Classification}
We summarize the main results of the previous section.
\begin{prop} \label{prop:vw}
  \begin{enumerate}
     \item Any homogeneous right coideal subalgebra of $U$ is of the form $S(U^-[v])U^+[w]U^0$ for some $v,w\in W$.
  
     \item The pair $(v,w)$ in (1) is uniquely determined by the coideal subalgebra.
  
     \item For any $v,w\in W$ the subspace $S(U^-[v])U^+[w]U^0$ is  a right coideal subalgebra if and only if it is a subalgebra of $U$.
  \end{enumerate}   
\end{prop}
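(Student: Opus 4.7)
The plan is to handle the three parts in sequence, using Proposition~\ref{prop:triang}, Theorem~\ref{thm:rcsa+}, Corollary~\ref{cor:rcsa-}, and the elementary fact that the product of two right coideals in a Hopf algebra is again a right coideal. For part~(1), given an arbitrary homogeneous right coideal subalgebra $C$ of $U$, Proposition~\ref{prop:triang} yields the factorisation $C=(C\cap U^-)\,U^0\,(C\cap U^+)$. Setting $C^+:=(C\cap U^+)U^0$ and $C^-:=(C\cap U^-)U^0$, a routine check---using that $U^0$ normalises $U^\pm$ via the $Q$-grading and that $\kow(U^+)\subseteq \uqbp\ot U^+$ and $\kow(U^-)\subseteq U^{\le 0}\ot U^-$---shows that $C^+$ is a homogeneous right coideal subalgebra of $\uqbp$ and $C^-$ is a homogeneous right coideal subalgebra of $U^{\le 0}$. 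Theorem~\ref{thm:rcsa+} then supplies $w\in W$ with $C^+=U^+[w]U^0$, and Corollary~\ref{cor:rcsa-} supplies $v\in W$ with $C^-=S(U^-[v])U^0$, so $C=C^-C^+=S(U^-[v])U^+[w]U^0$.

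For part~(2), the subspaces $C\cap U^\pm$ are intrinsically determined by $C$ via Proposition~\ref{prop:triang}, so the factors $C^\pm$ are determined by $C$. Then $w$ is the unique element yielded by Theorem~\ref{thm:rcsa+}. For uniqueness of $v$, I would use that both $S$ and $\omega$ preserve $U^0$ setwise and that $\omega(U^-[u])=U^+[u]$ by the definition $U^-[u]=\omega(U^+[u])$; hence the bijection $\omega\circ S^{-1}$ on $U$ sends $S(U^-[u])U^0$ to $U^+[u]U^0$, and the equality $S(U^-[v])U^0=S(U^-[v'])U^0$ reduces to $U^+[v]U^0=U^+[v']U^0$, giving $v=v'$ by Theorem~\ref{thm:rcsa+}.

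For part~(3), one direction is trivial. For the converse, the key observation is that if $A$ and $B$ are right coideals of the Hopf algebra $U$ then so is $AB$, since $\kow(ab)=\kow(a)\kow(b)\in (A\ot U)(B\ot U)\subseteq AB\ot U$. Applying this to the right coideal subalgebras $S(U^-[v])U^0$ and $U^+[w]U^0$, whose product equals $S(U^-[v])U^+[w]U^0$ (using $U^0U^+[w]=U^+[w]U^0$ from the weight grading), shows that $S(U^-[v])U^+[w]U^0$ is always a right coideal of $U$; therefore the subalgebra condition is the only additional requirement.

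The main obstacle I foresee is the bookkeeping in part~(1), namely verifying rigorously that $C^\pm$ as defined above are genuine right coideal subalgebras of $\uqbp$ and $U^{\le 0}$ respectively, so that the classification theorems \ref{thm:rcsa+} and \ref{cor:rcsa-} can be applied. The remaining steps are essentially bijective rewriting, together with the universal product-of-coideals observation driving part~(3).
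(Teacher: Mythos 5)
Your proposal is correct and follows essentially the same route as the paper: part (1) via the triangular decomposition of Proposition~\ref{prop:triang} combined with Theorem~\ref{thm:rcsa+} and Corollary~\ref{cor:rcsa-}, part (2) from the injectivity in those classifications, and part (3) from the fact that the product of right coideals is again a right coideal. You merely spell out details (the transfer of uniqueness for $v$ via $\omega\circ S^{-1}$, the coideal-product computation) that the paper's three-sentence proof leaves implicit.
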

\begin{proof}
  Property (1) follows from the triangular decomposition given by Proposition \ref{prop:triang}, from the classification in Theorem \ref{thm:rcsa+}, and from Corollary \ref{cor:rcsa-}.
  Property (2) holds because $U^+[w]\neq U^+[w']$ if $w\neq w'$.
  Finally, Property (3) just expresses the fact that $S(U^-[v])U^+[w]U^0$
  is a right coideal which also holds by Theorem \ref{thm:rcsa+}.
\end{proof}
By the above proposition, to determine all homogeneous right coideal subalgebras of $U$ we need to determine the set
\begin{align*}
  A(W):=\{(v,w)\in W^2\,|\,S(U^-[v])U^+[w] U^0 \mbox{ is a subalgebra of $U$}\}.
\end{align*}
To this end we first provide a few preparatory lemmas.
\begin{lem}\label{lem:vw}
  Let $v,w\in W$. Then $(v,w)\in A(W)$ if and only if $(w,v)\in A(W)$.
\end{lem}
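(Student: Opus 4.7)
The plan is to construct an involutive algebra antiautomorphism $\eta $ of $U$ that interchanges $S(U^-[v])U^+[w]U^0$ with $S(U^-[w])U^+[v]U^0$. Since algebra (anti)automorphisms send subalgebras to subalgebras, the lemma will follow at once.

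Explicitly I would set $\eta =\tau \circ \omega $. Because $\omega $ is an algebra automorphism and $\tau $ is an algebra antiautomorphism, $\eta $ is an algebra antiautomorphism, and a quick check on generators gives $\eta (E_\al )=F_\al $, $\eta (F_\al )=E_\al $, and $\eta (K_\al )=K_\al $, so $\eta ^2=\id $.

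The key identity is $\eta (U^+[w]U^0)=S(U^-[w])U^0$ for every $w\in W$. By the definition $U^-[w]=\omega (U^+[w])$ and the fact that $\omega (U^0)=U^0$ one obtains $\omega (U^+[w]U^0)=U^-[w]U^0$. Lemma~\ref{le:S-tau} applied on each weight space $U^-[w]_{-\mu }$ yields $\tau (u)=c(\mu )S(u)K_{-\mu }$, hence $\tau (U^-[w])\subseteq S(U^-[w])U^0$; since $U^0$ commutes up to scalars with weight vectors, this upgrades to $\tau (U^-[w]U^0)=S(U^-[w])U^0$, and composing with $\omega $ produces $\eta (U^+[w]U^0)=S(U^-[w])U^0$. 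Applying $\eta $ once more and using $\eta ^2=\id $ gives the companion identity $\eta (S(U^-[v])U^0)=U^+[v]U^0$.

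To finish, set $A=S(U^-[v])U^0$ and $B=U^+[w]U^0$. The triangular decomposition of Proposition~\ref{prop:triang} together with the $U^0$-invariance of $U^+[w]$ under conjugation shows $AB=S(U^-[v])U^+[w]U^0$, and likewise $\eta (B)\eta (A)=S(U^-[w])U^+[v]U^0$. Since $\eta $ is an antiautomorphism, $\eta (AB)=\eta (B)\eta (A)=S(U^-[w])U^+[v]U^0$, and because $\eta $ preserves the property of being a subalgebra, $AB$ is a subalgebra if and only if $\eta (AB)$ is, which is precisely the claim $(v,w)\in A(W)\Leftrightarrow (w,v)\in A(W)$. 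The only mildly fussy step is the bookkeeping that converts $\tau $ into $S$ via Lemma~\ref{le:S-tau} in the presence of $U^0$-factors; everything else is essentially formal.
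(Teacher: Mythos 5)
Your proof is correct and follows essentially the same strategy as the paper, namely exhibiting an involutive algebra antiautomorphism that exchanges $S(U^-[v])U^+[w]U^0$ with $S(U^-[w])U^+[v]U^0$. The paper simply takes $S\circ\omega$ instead of $\tau\circ\omega$, which yields $S\circ\omega(U^+[w])=S(U^-[w])$ and $S\circ\omega(S(U^-[v]))=U^+[v]$ directly and thereby avoids the Lemma~\ref{le:S-tau} bookkeeping you need to convert $\tau$ into $S$ on the weight spaces of $U^-[w]$.
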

\begin{proof}
  Observe that $S\circ \omega$ is an algebra antiautomorphism such that $(S\circ \omega)^2=\id$. Moreover, $S\circ\omega(U^+[w])=S(U^-[w])$ and $S\circ \omega(S(U^-[v]))=\omega(U^-[v])=U^+[v]$. Hence $S(U^-[v]) U^+[w] U^0$ is an algebra if and only if 
\begin{align*}
  S\circ \omega(S(U^-[v]) U^+[w] U^0)=S(U^-[w]) U^+[v] U^0
\end{align*}
is an algebra.
\end{proof}
\begin{lem}\label{lem:rueber}
  Let $(v,w)\in W^2$ and $\alpha \in \Pi $ such that $\ell (s_\alpha v)=\ell (v)+1$
  and $\ell (s_\alpha w)=\ell (w)+1$. Then
  $(s_\alpha v,w)\in A(W)$ if and only if $(v,s_\alpha w)\in A(W)$.
\end{lem}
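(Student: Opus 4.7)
The plan is to show that the Lusztig automorphism $T_{s_\alpha}$ carries the subspace $S(U^-[s_\alpha v])U^+[w]U^0$ bijectively onto $S(U^-[v])U^+[s_\alpha w]U^0$. Since $T_{s_\alpha}$ is an algebra automorphism of $U$, such an identity of vector spaces immediately gives the equivalence of the two subalgebra properties and hence the lemma.

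First I would decompose the negative part. Because $\ell(s_\alpha v)=\ell(v)+1$, Corollary~\ref{cor:SU-wdec} applied with $x=s_\alpha$, $y=v$ gives
\begin{align*}
S(U^-[s_\alpha v])U^0 = \bigl(T_{s_\alpha}^{-1}S(U^-[v])\bigr)\,S(U^-[s_\alpha])\,U^0.
\end{align*}
Now $S(F_\alpha)=-F_\alpha K_\alpha$, so $S(U^-[s_\alpha])U^0 = \field\langle F_\alpha\rangle U^0$; by \cite[8.14]{b-Jantzen96} one has $T_{s_\alpha}(F_\alpha)=-K_\alpha E_\alpha$, so $T_{s_\alpha}(S(U^-[s_\alpha]))U^0=\field\langle E_\alpha\rangle U^0$. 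Using also $T_{s_\alpha}(U^0)=U^0$, I would obtain
\begin{align*}
T_{s_\alpha}\bigl(S(U^-[s_\alpha v])U^0\bigr) = S(U^-[v])\,\field\langle E_\alpha\rangle\,U^0.
\end{align*}

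Next I would decompose the positive part. Because $\ell(s_\alpha w)=\ell(w)+1$, Corollary~\ref{cor:U+wdec} applied with $x=s_\alpha$, $y=w$ gives
\begin{align*}
U^+[s_\alpha w] = \field\langle E_\alpha\rangle\, T_{s_\alpha}(U^+[w]).
\end{align*}
Combining the two and using that every factor involved is $U^0$-stable under conjugation (so $U^0$ may be moved freely past such a factor when multiplying subspaces), I would then compute
\begin{align*}
T_{s_\alpha}\bigl(S(U^-[s_\alpha v])U^+[w]U^0\bigr)
&= S(U^-[v])\,\field\langle E_\alpha\rangle\,T_{s_\alpha}(U^+[w])\,U^0 \\
&= S(U^-[v])\,U^+[s_\alpha w]\,U^0,
\end{align*}
which is exactly the equality of subspaces sought.

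There is no real obstacle: the content of the lemma is already packaged inside Corollaries~\ref{cor:U+wdec} and~\ref{cor:SU-wdec}, which factor $U^+[s_\alpha w]$ and $S(U^-[s_\alpha v])$ at the simple reflection $s_\alpha$ in precisely the form that matches after applying $T_{s_\alpha}$. The only delicate point is bookkeeping: one must be careful that the $U^0$ factor can be moved past the weight-graded subspaces, and that the identity $T_{s_\alpha}(F_\alpha)=-K_\alpha E_\alpha$ correctly converts the leftover $S(U^-[s_\alpha])U^0$ into $\field\langle E_\alpha\rangle U^0$.
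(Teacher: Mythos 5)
Your proposal is correct and is essentially the paper's own proof: apply the algebra automorphism $T_\alpha$ and use Corollaries~\ref{cor:U+wdec} and~\ref{cor:SU-wdec} (equivalently Proposition~\ref{prop:PBW+}) to identify $T_\alpha\bigl(S(U^-[s_\alpha v])U^+[w]U^0\bigr)$ with $S(U^-[v])U^+[s_\alpha w]U^0$; you merely spell out the bookkeeping that the paper leaves implicit. (The only nit: in Jantzen's normalization $T_\alpha(F_\alpha)=-K_\alpha^{-1}E_\alpha$ rather than $-K_\alpha E_\alpha$, but this is absorbed by the $U^0$ factor and does not affect the argument.)
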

\begin{proof}
  As $T_\alpha $ is an algebra automorphism of $U$, one has $(s_\alpha
  v,w)\in A(W)$ if and only if
  $T_\alpha (S(U^-[s_\alpha v])U^+[w] U^0)$ is a subalgebra of $U$. Using
  Proposition~\ref{prop:PBW+} and
  Corollary~\ref{cor:SU-wdec} one obtains that
  \begin{align*}
    T_\alpha (S(U^-[s_\alpha v])U^+[w] U^0)=S(U^-[v])U^+[s_\alpha w]U^0
  \end{align*}
  which completes the proof of the lemma.
\end{proof}
\begin{lem}\label{lem:ik}
  Let $(v,w)\in W^2$ and $\alpha \in \Pi $ such that $\ell (s_\alpha v)=\ell (v)+1$
  and $\ell (s_\alpha w)=\ell (w)+1$. Assume that $(s_\alpha v,s_\alpha w)\in A(W)$.
  Then $\alpha \in v\Pi \cap w\Pi $.
\end{lem}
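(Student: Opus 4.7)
By Lemma~\ref{lem:vw}, the hypothesis $(s_\alpha v,s_\alpha w)\in A(W)$ is invariant under interchanging $v$ and $w$, so it suffices to show $\alpha\in w\Pi$; the containment $\alpha\in v\Pi$ then follows by the analogous argument applied to $(s_\alpha w,s_\alpha v)$. Since $\ell(s_\alpha w)=\ell(w)+1$, Lemma~\ref{lem:EUw}(2) reduces the goal to showing that $\field\langle E_\alpha\rangle U^+[w]U^0$ is a subalgebra of $\uqbp$; equivalently, using that $\field\langle E_\alpha\rangle U^+[w]U^0 \cap U^+ = \field\langle E_\alpha\rangle U^+[w]$, it suffices to verify that the quantum commutator
\[
[u,E_\alpha]_q := u E_\alpha - q^{(\alpha,\wght u)}E_\alpha u
\]
lies in $\field\langle E_\alpha\rangle U^+[w]$ for every weight vector $u\in U^+[w]$.

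Set $C := S(U^-[s_\alpha v])U^+[s_\alpha w]U^0$, a subalgebra of $U$ by hypothesis. Two containments drive the argument: $F_\alpha\in S(U^-[s_\alpha])U^0\subseteq C$, via Corollary~\ref{cor:SU-wdec} with $x=s_\alpha$, $y=v$; and $T_\alpha(u)\in T_\alpha(U^+[w])\subseteq U^+[s_\alpha w]\subseteq C$, via Corollary~\ref{cor:U+wdec}. Hence $[T_\alpha(u),F_\alpha]\in C$. Since $T_\alpha$ is an algebra automorphism with $T_\alpha^{-1}(F_\alpha)=-E_\alpha K_\alpha$, a short computation yields
\[
[T_\alpha(u),F_\alpha] \;=\; -T_\alpha\bigl([u,E_\alpha]_q\bigr)\, K_\alpha^{-1},
\]
and therefore $T_\alpha\bigl([u,E_\alpha]_q\bigr)\in C$, i.e.\ $[u,E_\alpha]_q \in T_\alpha^{-1}(C)$.

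Combined with the automatic membership $[u,E_\alpha]_q\in U^+[s_\alpha w]$ (which holds because $U^+[s_\alpha w]$ is a subalgebra containing both $u$ and $E_\alpha$), we obtain $[u,E_\alpha]_q \in T_\alpha^{-1}(C)\cap U^+[s_\alpha w]$. The proof is then completed by verifying the inclusion
\[
T_\alpha^{-1}(C)\cap U^+[s_\alpha w] \;\subseteq\; \field\langle E_\alpha\rangle U^+[w].
\]
This uses the decomposition $U^+[s_\alpha w]=\field\langle E_\alpha\rangle T_\alpha(U^+[w])$ from Corollary~\ref{cor:U+wdec}, the identity $T_\alpha^{-1}(E_\alpha)=-K_\alpha^{-1}F_\alpha$, and a PBW-degree analysis based on Proposition~\ref{prop:PBW+} together with the PBW basis~\eqref{eq:Uminus-PBW} for $U^-[v]$.

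The main obstacle is this final inclusion. Because $T_\alpha^{-1}(C)$ admits a complicated description involving $E_\alpha$, $F_\alpha$, $U^-[v]$, $U^+[w]$, and $U^0$, one must check via careful PBW bookkeeping that any element happening to lie in $U^+[s_\alpha w]=\field\langle E_\alpha\rangle T_\alpha(U^+[w])$ in fact admits an expression in the smaller subspace $\field\langle E_\alpha\rangle U^+[w]$. Equivalently, the $T_\alpha(U^+[w])$-components arising from $T_\alpha^{-1}(C)$ must collapse onto $U^+[w]$, and controlling this collapse is the technical heart of the argument.
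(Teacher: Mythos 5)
Your overall strategy --- reducing via Lemma~\ref{lem:vw} to the claim $\alpha\in w\Pi$, aiming to show that $\field\langle E_\alpha\rangle U^+[w]U^0$ is a subalgebra, and invoking Lemma~\ref{lem:EUw}(2) --- is exactly the paper's, and your observations that $F_\alpha\in C$ and that $T_\alpha^{-1}(F_\alpha)=-E_\alpha K_\alpha$ are the right ingredients. But the execution has a genuine gap, and in fact a circularity. You assert that $[u,E_\alpha]_q\in U^+[s_\alpha w]$ ``because $U^+[s_\alpha w]$ is a subalgebra containing both $u$ and $E_\alpha$''. It does contain $E_\alpha$, but it does not contain $u\in U^+[w]$ in general: by Theorem~\ref{thm:rcsa+} the inclusion $U^+[w]\subseteq U^+[s_\alpha w]$ is equivalent to $w\rwo s_\alpha w$, i.e.\ to $s_\alpha w=ws_\beta$ for some $\beta\in\Pi$, which by Lemma~\ref{lem:EUw}(2) is precisely the statement $\alpha\in w\Pi$ that you are trying to prove. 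Since $U^+[s_\alpha w]=\field\langle E_\alpha\rangle T_\alpha(U^+[w])$ by Corollary~\ref{cor:U+wdec}, it is $T_\alpha(u)$, not $u$, that lies in $U^+[s_\alpha w]$. On top of this, the step you yourself label the ``technical heart'' --- the inclusion $T_\alpha^{-1}(C)\cap U^+[s_\alpha w]\subseteq\field\langle E_\alpha\rangle U^+[w]$ --- is only announced, not proved, so even granting the circular membership the argument is incomplete.

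The repair is to apply $T_\alpha^{-1}$ to a whole subalgebra rather than to individual commutators. Since $F_\alpha\in C$ and $U^+[s_\alpha w]U^0\subseteq C$, the triangular decomposition of $C$ (Proposition~\ref{prop:triang}, combined with a weight argument on the $U^-$-components) shows that $\field\langle F_\alpha\rangle U^+[s_\alpha w]U^0$ is itself a subalgebra of $U$. Applying the automorphism $T_\alpha^{-1}$ and using $U^+[s_\alpha w]=\field\langle E_\alpha\rangle T_\alpha(U^+[w])$ identifies its image as the subalgebra $\field\langle E_\alpha,F_\alpha\rangle U^+[w]U^0$; intersecting with $\uqbp$ (an intersection of two subalgebras, identified via the triangular decomposition) shows that $\field\langle E_\alpha\rangle U^+[w]U^0$ is a subalgebra of $\uqbp$, and Lemma~\ref{lem:EUw}(2) then gives $\alpha\in w\Pi$. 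This route avoids both the unproved inclusion and the circular membership claim.
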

\begin{proof}
  The assumption $\ell (s_\alpha v)=\ell (v)+1$ implies that
  $F_\alpha\in S(U^-[s_\alpha v])U^0$.
  Hence $(s_\alpha v, s_\alpha w)\in A(W)$ implies that
  $\field\langle F_\alpha \rangle U^+[ s_\alpha w]U^0$ is a subalgebra
  of $U$. Hence 
  \begin{align*}
    T_\alpha ^{-1}(\field\langle F_\alpha \rangle U^+[s_\alpha w]U^0)
    =\field \langle E_\alpha ,F_\alpha \rangle U^+[w]U^0
  \end{align*}
  is a subalgebra of $U$. This is only possible if $\field \langle
  E_\alpha \rangle U^+[w]U^0$ is a subalgebra of $\uqbp$. Hence
  $\alpha \in w\Pi $ by Lemma~\ref{lem:EUw}.
  The statement about $v$ follows from the above argument and
  Lemma \ref{lem:vw}.
\end{proof}
For any subset $J\subseteq \Pi $ we write $w_J$ to denote the longest element
of the parabolic subgroup $W_J$ corresponding to $J$. For any $v\in W$ define
\begin{align*}
  J_v=\{\alpha \in \Pi\,|\,\ell (s_\alpha v)<\ell (v)\}.
\end{align*}
\begin{lem}\label{lem:vvJ}
  Let $v\in W$ and set $J=J_v$. Assume that for any $\alpha \in J$ there
  exists $\beta \in \Pi $ such that $vs_\beta =s_\alpha v$. Then $v=w_J$.
\end{lem}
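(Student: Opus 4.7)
The plan is to identify the inversion set $\Phi ^+(v)=\{\gamma \in \Phi ^+\,|\,v^{-1}\gamma <0\}$ with the positive roots $\Phi ^+_J := \Phi ^+ \cap \Phi _J$ of the parabolic subsystem generated by $J$, since $\Phi ^+(w_J) = \Phi ^+_J$ characterizes $w_J$ inside $W$. The hypothesis for $\alpha \in J$ rewrites as $v^{-1}\alpha = -\beta $ with $\beta \in \Pi $, producing an injective map
\begin{equation*}
\tau \colon J\to \Pi ,\qquad \alpha \mapsto -v^{-1}\alpha ,
\end{equation*}
together with the identity $v^{-1}s_\alpha v=s_{\tau (\alpha )}$. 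Setting $J' := \tau (J)\subseteq \Pi $, the latter identity shows that conjugation by $v^{-1}$ induces an isomorphism of Coxeter groups $W_J\cong W_{J'}$; in particular $|\Phi ^+_J|=|\Phi ^+_{J'}|$.

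Extending $\tau $ linearly and using that $\tau (\alpha )\in \Pi $ for $\alpha \in J$, I get the inclusion $v^{-1}(\Phi ^+_J) \subseteq -\Phi ^+_{J'}$; the cardinality match promotes this to an equality $v^{-1}(\Phi ^+_J)=-\Phi ^+_{J'}$, equivalently $v(\Phi ^+_{J'})=-\Phi ^+_J$. In particular $\Phi ^+_J\subseteq \Phi ^+(v)$, so $\ell (v)\geq \ell (w_J)$.

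For the reverse inclusion, suppose by contradiction that $\gamma =\sum _{\alpha \in \Pi }c_\alpha \alpha \in \Phi ^+(v)\setminus \Phi ^+_J$, with $c_\alpha \geq 0$ and some $c_\alpha >0$ for $\alpha \in \Pi \setminus J$. Since such $\alpha \notin J_v=J$, one has $v^{-1}\alpha \in \Phi ^+$. Expanding
\begin{equation*}
v^{-1}\gamma \;=\;-\sum _{\alpha \in J}c_\alpha \,\tau (\alpha )+\sum _{\alpha \in \Pi \setminus J}c_\alpha \,v^{-1}\alpha ,
\end{equation*}
the first sum is supported on $J'$ and the second has non-negative coefficients on $\Pi $. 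Since $v^{-1}\gamma $ itself is a non-positive integer combination of $\Pi $, the coefficient of every $\alpha _k \in \Pi \setminus J'$ in $\sum _{\alpha \in \Pi \setminus J}c_\alpha v^{-1}\alpha $ has to vanish, which forces $v^{-1}\alpha $ to be supported on $J'$ whenever $\alpha \in \Pi \setminus J$ and $c_\alpha >0$. For such $\alpha $, the positive root $v^{-1}\alpha $ lies in $\Phi ^+_{J'}$, so $\alpha =v(v^{-1}\alpha )\in v(\Phi ^+_{J'})=-\Phi ^+_J\subseteq \Phi ^-$, contradicting $\alpha \in \Pi \subseteq \Phi ^+$. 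Hence $\Phi ^+(v)=\Phi ^+_J=\Phi ^+(w_J)$, whence $v=w_J$.

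The technical crux is the equality $v^{-1}(\Phi ^+_J)=-\Phi ^+_{J'}$: the inclusion is immediate from $\tau (J)\subseteq \Pi $, but upgrading to equality requires the cardinality $|\Phi ^+_J|=|\Phi ^+_{J'}|$, which in turn genuinely uses the Coxeter-group isomorphism $W_J\cong W_{J'}$ extracted from $v^{-1}s_\alpha v=s_{\tau (\alpha )}$ rather than just the injectivity of $\tau $.
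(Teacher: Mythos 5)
Your proof is correct. It rests on the same two combinatorial facts that drive the paper's argument --- the hypothesis together with $\alpha \in J=J_v$ forces $v^{-1}\alpha \in -\Pi $ for all $\alpha \in J$, and $v^{-1}\beta \in \Phi ^+$ for all $\beta \in \Pi \setminus J$ --- but the execution differs. The paper sets $K=\{-v^{-1}\alpha \,|\,\alpha \in J\}$ (your $J'$) and $u=w_Jv$, and checks directly that $u^{-1}$ sends every simple root to a positive root, so $\ell (u)=0$ and $v=w_J^{-1}=w_J$; this bypasses both the inversion-set bookkeeping and any counting. You instead compute the full inversion set $\Phi ^+(v)$ and identify it with $\Phi ^+_J=\Phi ^+(w_J)$, which is equally valid. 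One remark on your closing comment: the cardinality step $|\Phi ^+_J|=|\Phi ^+_{J'}|$, and hence the Coxeter-system isomorphism $W_J\cong W_{J'}$, is not actually the crux --- it is avoidable. Since $vJ'=-J\subseteq -\Pi $, the same ``immediate'' support argument that yields $v^{-1}(\Phi ^+_J)\subseteq -\Phi ^+_{J'}$ also yields $v(\Phi ^+_{J'})\subseteq -\Phi ^+_J$ directly, and that inclusion is all your final contradiction uses. The genuinely essential part of your proof is the support analysis in the second half, which is a close cousin of the paper's step showing $u^{-1}\beta =v^{-1}\beta -\gamma '$ with $\gamma '\in \sum _{\alpha \in K}\N _0\alpha $ and $v^{-1}\beta \in \Phi ^+\setminus \sum _{\alpha \in K}\N _0\alpha $.
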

\begin{proof}
  Lemma~\ref{lem:EUw}(2) with $w=s_\alpha v$ implies that
  $v^{-1}\alpha \in -\Pi $ for all $\alpha \in J$. Let $K=\{-v^{-1}\alpha
  \,|\,\alpha \in J\}$ and $u=w_Jv$.
  Then $K\subseteq \Pi $ and $u^{-1}\alpha \in \Pi $ for all $\alpha \in J$.
  Let $\beta \in \Pi \setminus J$ and let $\gamma \in \sum _{\alpha
  \in J}\N _0\alpha $ such that $w_J^{-1}\beta =\beta +\gamma $.
  Then
  $$u^{-1}\beta = v^{-1}(\beta +\gamma )\in v^{-1}\beta -\sum _{\alpha \in K}
  \N _0\alpha .$$
  Since $\beta \notin J=J_v$, we conclude that $v^{-1}\beta \in \Phi ^+$.
  Further, since $\beta \notin \sum _{\alpha \in J}\Z \alpha $,
  also $v^{-1}\beta \notin \sum _{\alpha \in K}\N _0\alpha $ holds.
  Thus $u^{-1}\beta \in \Phi ^+$. Therefore $u^{-1}\Pi \subseteq \Phi ^+$,
  that is, $\ell (u)=0$. Since $w_J=w_J^{-1}$, this implies that $v=w_J$.
\end{proof}
\begin{prop}\label{prop:decomp}
  Let $(v,w)\in A(W)$. Then there exist elements $u,x\in W$
  and a subset $J\subseteq \Pi $ such that the following properties hold:
  \begin{enumerate}
     \item $v=uw_J$ and $w=uw_J x$.
     \item $J\subseteq \Pi\cap x\Pi$.
     \item $u^{-1} \rwo x$.
  \end{enumerate}
  Moreover, the triple $(x,u,J)$ is uniquely determined by the pair $(v,w)$
  and one has
  \begin{align}
    \ell (v)=&\;\ell (u)+\ell (w_J), \label{eq:ellv} \\
    \ell (w)=&\;\ell (x)+\ell(w_J)-\ell (u). \label{eq:ellw}
  \end{align}
\end{prop}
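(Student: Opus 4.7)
I would prove existence of the triple $(x,u,J)$ by induction on $\ell(v)$ and handle uniqueness separately. The base case $\ell(v)=0$ is immediate: take $u=1$, $J=\emptyset$, $x=w$; all four conditions and length equalities then hold trivially.

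For the inductive step with $\ell(v)\geq 1$, I would distinguish two sub-cases depending on whether $J_v\subseteq J_w$. If there exists $\alpha\in J_v\setminus J_w$, then $\ell(s_\alpha v)=\ell(v)-1$ and $\ell(s_\alpha w)=\ell(w)+1$. Lemma~\ref{lem:rueber} applied with the roles of $v$ and $s_\alpha v$ interchanged converts $(v,w)\in A(W)$ into $(s_\alpha v,s_\alpha w)\in A(W)$, and the induction hypothesis then supplies a triple $(x',u',J')$ for this smaller pair. I would set $x=x'$, $u=s_\alpha u'$, $J=J'$. The identity $(s_\alpha v)^{-1}(s_\alpha w)=v^{-1}w$ preserves $x$, and $\ell(s_\alpha u')=\ell(u')+1$ is forced by comparing $\ell(v)=\ell(v')+1$ with the length-additive factorization $v=s_\alpha u'w_{J'}$. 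Condition~(3) for the new triple reduces to checking that $\alpha$ is a left descent of the element $y'$ defined by $x=(u')^{-1}y'$, which follows from comparing the inductive length identities for $\ell(s_\alpha w)$ and $\ell(w)$.

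If on the other hand $J_v\subseteq J_w$, then for every $\alpha\in J_v$ the pair $(v,w)=(s_\alpha\cdot s_\alpha v,\,s_\alpha\cdot s_\alpha w)$ satisfies the hypothesis of Lemma~\ref{lem:ik}, yielding $\alpha\in(s_\alpha v)\Pi$ and hence a simple root $\beta\in\Pi$ with $vs_\beta=s_\alpha v$. This is precisely the hypothesis of Lemma~\ref{lem:vvJ}, so $v=w_{J_v}$. I would then set $u=1$, $J=J_v$, $x=v^{-1}w=w_{J_v}w$. Conditions~(1) and~(3) and the identity $\ell(v)=\ell(w_{J_v})$ are immediate; the length identity $\ell(w)=\ell(x)+\ell(w_{J_v})$ follows from the standard fact that $J_v\subseteq J_w$ implies $w_{J_v}\leq_L w$. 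For condition~(2), the same application of Lemma~\ref{lem:ik} to each $\alpha\in J_v\subseteq J_w$ produces $\gamma\in\Pi$ with $ws_\gamma=s_\alpha w$; using that $w_{J_v}$ acts on $J_v$ via the diagram involution $\alpha\mapsto\alpha^\ast$, so $w_{J_v}\alpha=-\alpha^\ast$, one computes $x^{-1}\alpha=-w^{-1}\alpha^\ast=\gamma^\ast\in\Pi$.

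For uniqueness, $w=vx$ forces $x=v^{-1}w$, and once $J$ is known $u=vw_J$ is determined; uniqueness of $J$ itself then follows from the length-additive factorization of $v$ combined with Lemma~\ref{lem:vvJ} and the prefix condition $u^{-1}\leq_R x$. I expect the main obstacle to be verifying condition~(3) in the first sub-case and condition~(2) in the second: both involve subtle sign control, requiring the conjugation identities $s_\alpha v=vs_\beta$ and $s_\alpha w=ws_\gamma$ supplied by Lemma~\ref{lem:ik} to be combined carefully with the parabolic structure of $w_{J_v}$ and the length-data carried by the inductive triple.
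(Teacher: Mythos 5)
Your route is genuinely different in organization from the paper's. The paper makes one global choice of $u$ (namely $u\le_R v$ maximal with $\ell(u^{-1}w)=\ell(u)+\ell(w)$) and then identifies $u^{-1}v=w_J$ in one stroke via Lemmas~\ref{lem:ik}, \ref{lem:EUw} and \ref{lem:vvJ}; you instead peel off one simple reflection at a time by induction on $\ell(v)$, splitting into the cases $J_v\setminus J_w\neq\emptyset$ (reduce via Lemma~\ref{lem:rueber}) and $J_v\subseteq J_w$ (terminal case, where your identification $v=w_{J_v}$ and your verification of condition~(2) via $w_{J_v}\alpha=-\alpha^\ast$ and Lemma~\ref{lem:ik} applied on the $w$-side are correct and parallel the paper's argument with $u=e$). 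The same lemmas carry both proofs; yours trades the single maximal choice for an explicit recursion with more length bookkeeping.

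Two places need repair. First, in the inductive sub-case your justification of condition~(3) is insufficient as stated. Writing $w_{J'}x'=x'w_{K'}$ with $x'K'=J'$, the length-additivity of $u'x'\cdot w_{K'}$ together with $\ell(s_\alpha w)=\ell(w)+1$ only gives
\begin{align*}
\alpha\in\Phi^+(u'x'w_{K'})=\Phi^+(u'x')\sqcup u'x'\Phi^+(w_{K'}),
\end{align*}
and $\Phi^+(w_{K'})$ is the set of positive roots lying in $\Z K'$. So ``comparing the inductive length identities for $\ell(s_\alpha w)$ and $\ell(w)$'' does not by itself make $\alpha$ a left descent of $u'x'$: you must exclude the second alternative $\alpha\in u'x'\Phi^+(w_{K'})=u'\bigl(\Phi^+\cap\Z J'\bigr)$. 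This is exactly where $\alpha\in J_v$ enters: $v^{-1}\alpha<0$ with $v=s_\alpha u'w_{J'}$ means $w_{J'}(u')^{-1}\alpha>0$, whereas $(u')^{-1}\alpha\in\Phi^+\cap\Z J'$ would force $w_{J'}(u')^{-1}\alpha<0$. With that inserted the step closes and condition~(3) follows. Second, the uniqueness sketch is too thin, and Lemma~\ref{lem:vvJ} is not the relevant tool there. What is needed (and what the paper does) is that, setting $M=\Pi\cap x\Pi$, conditions~(2) and~(3) force $u$ to be the minimal-length representative of the coset $vW_M$; hence $u$, then $w_J=u^{-1}v$, and then $J$ are determined by $(v,w)$.
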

\begin{proof}
  Choose $u\rwo v$ maximal such that $\ell (u^{-1} w)=\ell (u)+\ell (w)$.
  By Lemma \ref{lem:rueber} one has $(u^{-1}v,u^{-1}w)\in A(W)$. Define
  $J=J_{u^{-1}v}$. By choice of $u$ one has
  \begin{align}\label{eq:siJ}
   \ell (s_\alpha u^{-1}w)=\ell (u^{-1}w)-1 \qquad \mbox{ for any $\alpha \in J$.}
  \end{align}
  This and Lemma \ref{lem:ik} yield that
  $\alpha \in s_\alpha u^{-1}v\Pi $ for all $\alpha \in J$. By
  Lemma~\ref{lem:EUw}(2), for all $\alpha \in J$ there exists
  $\beta \in \Pi $ such that $u^{-1}v=s_\alpha u^{-1}vs_\beta $,
  that is, $s_\alpha u^{-1}v=u^{-1}vs_\beta $.
  Then Lemma \ref{lem:vvJ} implies that $u^{-1}v=w_J$. In particular,
  Equation~\eqref{eq:ellv} holds.
  Moreover, Equation \eqref{eq:siJ} implies that $w_J\rwo u^{-1}w$.
  Hence there exists $x\in W$ such that $u^{-1}w=w_Jx$ and
  $\ell (w_J)+\ell (x)=\ell (u^{-1}w)$. In particular, property (1) holds and
  Equation~\eqref{eq:ellw} is satisfied by the choice of $u$.
  
  To verify (2), choose $\alpha \in J$ and write $w_Jx=s_\alpha y$ with
  $\ell (w_Jx)=\ell (y)+1$. As $(w_J,w_Jx)\in A(W)$, Lemma \ref{lem:ik} implies
  that $\alpha \in y\Pi $. Hence $-\alpha \in w_Jx\Pi $, and hence
  $-w_J\alpha \in x\Pi $. Since $-w_J J=J$, property (2) holds.
 
  To prove (3), we first conclude from (2) and from Lemma~\ref{lem:EUw}(2)
  that there exists $K\subseteq \Pi $ with $xK=J$ and $w_Jx=xw_K$.
  By the choice of $u$ and $J$ we obtain that $\ell (us_\alpha )=\ell (u)+1$
  for all $\alpha \in J$ and hence $uJ\subseteq \Phi ^+$.
  Thus $uxK=uJ\subseteq \Phi ^+$,
  and therefore
  $$\ell (w)=\ell(uw_Jx)=\ell (uxw_K)=\ell (ux)+\ell (w_K)=\ell (ux)+\ell
  (w_J).$$
  Hence $\ell (ux)=\ell (x)-\ell (u)$ by Equation~\eqref{eq:ellw},
  that is, property (3) holds.

To prove uniqueness of the triple $(x,u,J)$,
observe first that $x=v^{-1}w$ is uniquely determined by $v$ and $w$.
Define $M=\Pi \cap x\Pi $. Then $J\subseteq M$ by (2). Moreover,
$\ell (x^{-1}s_\alpha )=\ell (x^{-1})+1$ for all $\alpha \in M$.
Hence, $x^{-1}$ is a minimal length left coset representative of
$x^{-1}W_M\in W/W_M$.
Property (3) implies that $u$ is a minimal
length left coset representative of $uW_M\in W/W_M$.
Therefore $u$ and $w_J$ are
uniquely determined by $v=uw_J$, see \cite[Proposition 2.4.4]{b-BjornerBrenti06}.
\end{proof}
Let $P(\Pi )$ denote the power set of $\Pi $.
Motivated by the above proposition we define a subset $B(W)\subseteq W^2\times
P(\Pi )$ by
\begin{align*}
  B(W)=\{(x,u,J)\in W^2\times P(\Pi )\,|\, J\subseteq \Pi\cap x\Pi
  \mbox{ and } \,u^{-1} \rwo x\}.
\end{align*}
We now show that we have a well defined map from $B(W)$ to $A(W)$.
\begin{prop}\label{prop:well-def}
  If $(x,u,J)\in B(W)$ then $(uw_J, uw_J x)\in A(W)$.
\end{prop}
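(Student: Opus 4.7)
The strategy is induction on $\ell(u)$. The preliminary ingredient is the pair of length identities $\ell(uw_J) = \ell(u) + \ell(w_J)$ and $\ell(uw_Jx) = \ell(w_J) + \ell(x) - \ell(u)$. These follow from the observation that $u^{-1}\rwo x$ is equivalent to $\Phi^+(u^{-1}) \subseteq \Phi^+(x)$, together with the fact that $J \subseteq \Pi \cap x\Pi$ gives $J \cap \Phi^+(x) = \emptyset$ (since $x^{-1}\alpha \in \Pi$ is positive for $\alpha \in J$). Hence $uJ \subseteq \Phi^+$, which yields the first identity, and the second follows from this together with $\ell(ux) = \ell(x) - \ell(u)$ and the factorization $uw_Jx = (ux)w_K$ where $K = x^{-1}J \subseteq \Pi$.

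For the inductive step, assume $\ell(u) \ge 1$ and write $u = s_{\alpha_k}u''$ reduced. The containment $\Phi^+((u'')^{-1}) \subseteq \Phi^+(u^{-1}) \subseteq \Phi^+(x)$ ensures that $(u'', x, J) \in B(W)$, so by induction $(u''w_J, u''w_Jx) \in A(W)$. The key trick is to apply Lemma~\ref{lem:rueber} to the slightly non-obvious pair $v := u''w_J$, $w := uw_J x$ with $\alpha = \alpha_k$: indeed $s_{\alpha_k}v = uw_J$ and $s_{\alpha_k}w = u''w_Jx$, while both length hypotheses $\ell(s_{\alpha_k}v) = \ell(v) + 1$ and $\ell(s_{\alpha_k}w) = \ell(w) + 1$ are immediate consequences of the length identities applied to $u$ and to $u''$. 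Lemma~\ref{lem:rueber} then yields $(uw_J, uw_Jx) \in A(W) \Leftrightarrow (u''w_J, u''w_Jx) \in A(W)$, closing the induction.

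The base case $u = e$ asserts $(w_J, w_Jx) \in A(W)$ for every $J \subseteq \Pi \cap x\Pi$. Using the decomposition $w_Jx = xw_K$ (lengths add since $x\beta \in J \subseteq \Pi$ for $\beta \in K$) and Corollary~\ref{cor:U+wdec}, one factors $U^+[w_Jx] = U^+[x] \cdot T_x(U^+[w_K])$. Since $T_x(E_\beta) = E_{x\beta}$ is the standard generator of $U$ for each $\beta \in K$, a direct verification on generators gives $T_x(U^+[w_K]) = U^+[w_J]$, and therefore $S(U^-[w_J]) U^+[w_Jx] U^0 = U_J \cdot U^+[x]U^0$, where $U_J$ denotes the parabolic Hopf subalgebra associated with $J$.

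The main obstacle is to show that $U_J \cdot U^+[x]U^0$ is closed under multiplication. Since $U_J$ is a Hopf subalgebra, this reduces to showing that $U^+[x]U^0$ is stable under the adjoint action of $U_J$. Stability under the weight shifts $\ad(K_\alpha^{\pm 1})$ for $\alpha \in J$ is immediate from the weight grading. For $\alpha \in J$, the combinatorial fact that $\Phi^+(x)$ and its complement $\Phi^+ \setminus \Phi^+(x)$ are both closed under root addition, combined with $\alpha \notin \Phi^+(x)$, implies that $s_\alpha$ permutes $\Phi^+(x)$; hence the Lusztig automorphism $T_\alpha$ preserves $U^+[x]$. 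This $T_\alpha$-invariance controls the adjoint actions of $E_\alpha$ and $F_\alpha$ on Lusztig root vectors, and the desired stability of $U^+[x]U^0$ is then verified on PBW monomials via the Levendorskii--Soibelman straightening formula.
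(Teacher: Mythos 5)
Your reduction to the base case $(w_J,w_Jx)\in A(W)$ is correct and is exactly the paper's argument: the length identities, and the iterated application of Lemma~\ref{lem:rueber} to the pair $(u''w_J,\,uw_Jx)$ with $u=s_{\alpha_k}u''$, match the proof in the text. The factorization $U^+[w_Jx]=U^+[x]\,T_x(U^+[w_K])=U^+[x]\,U^+[w_J]$ in the base case is also the paper's Equation~\eqref{eq:step}. The problem is the last paragraph, where you diverge from the paper, and there the argument breaks.

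The claim that $s_\alpha(\Phi^+(x))=\Phi^+(x)$ implies $T_\alpha(U^+[x])=U^+[x]$ is false. Take $\gfrak=\mathfrak{sl}_3$, $x=s_1s_2$, $\alpha=\alpha_2\in\Pi\cap x\Pi$. Then $\Phi^+(x)=\{\alpha_1,\alpha_1+\alpha_2\}$ is indeed permuted by $s_{\alpha_2}$, and $U^+[x]=\kspan_\field\{(T_1E_2)^b E_1^a\}$, so its weight space of weight $\alpha_1+\alpha_2$ is the line $\field\, T_1(E_2)=\field\,(E_1E_2-q^{-1}E_2E_1)$. But $T_2(E_1)=E_2E_1-q^{-1}E_1E_2$ also has weight $\alpha_1+\alpha_2$ and is not proportional to $T_1(E_2)$ when $q^2\neq 1$; hence $T_{\alpha_2}(U^+[x])\neq U^+[x]$. (The set $\Phi^+(x)$ does not determine the root vectors, only their weights.) Since the claimed $T_\alpha$-invariance is the mechanism by which you propose to control $\ad(E_\alpha)$ and $\ad(F_\alpha)$ on $U^+[x]U^0$, and the remaining verification is only deferred to an unexecuted Levendorskii--Soibelman computation, the closure of $U_q(\mathfrak{k}_J)\,U^+[x]\,U^0$ under multiplication is not established. (The ad-stability itself is true --- it must be, as the proposition holds --- but you would need a different proof of it.) The paper avoids this entirely with a short twist: by Lemma~\ref{lem:S-rewrite} and Corollary~\ref{cor:U+wdec}, $T_{w_J}\bigl(S(U^-[w_J])U^+[x]U^0\bigr)=U^+[w_J]\,T_{w_J}(U^+[x])\,U^0=U^+[w_Jx]U^0$, which is manifestly an algebra; pulling back by $T_{w_J}^{-1}$ and then multiplying on the right by $U^+[w_J]$ (using \eqref{eq:step}) gives $(w_J,w_Jx)\in A(W)$ with no analysis of adjoint actions. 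I recommend you replace your final paragraph by this argument.
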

\begin{proof}
  Let $(x,u,J)\in B(W)$. As $u^{-1}\rwo x$ and $\ell (s_\alpha x)=\ell (x)+1$
  for all $\alpha \in J$ one has $\ell (w_Jx)=\ell (w_J)+\ell (x)$ and
  $\ell (u w_J)=\ell (u) + \ell (w_J)$. Moreover,
  $J\subseteq \Pi \cap x\Pi $ and hence for all
  $\alpha \in J$ there exists $\beta \in \Pi $ with
  $s_\alpha x=xs_\beta $ by Lemma~\ref{lem:EUw}(2).
  This implies that $w_J x=x w_K$ for some subset $K\subseteq \Pi $.
  Hence $uw_J x=uxw_K$. Together with $\ell (w_J x)=\ell (w_J)+\ell (x)
  =\ell (x)+\ell (w_K)$
  this implies that
  $$\ell (uxw_K)= \ell (x)-\ell (u)+\ell (w_J)=\ell (w_J x) - \ell (u).$$
  Hence we can apply Lemma \ref{lem:rueber} and it suffices to show that
  $(w_J,w_J x)\in A(W)$.
  
Now, since $w_Jx=xw_K$ and $xK=J$, we conclude that
$T_x(U^+[w_K])=U^+[w_J]$ and hence Corollary \ref{cor:U+wdec} implies that
\begin{align}\label{eq:step}
  U^+[w_J x]=U^+[x] U^+[w_J]. 
\end{align}

To prove the proposition note that $U^+[w_J] T_{w_J}(U^+[x])= U^+[w_J x]$ is
an algebra. Hence Lemma \ref{lem:S-rewrite} implies that $T_{w_J}(S(U^-[w_J])
U^+[x] U^0)$ and hence also $S(U^-[w_J]) U^+[x] U^0$ are algebras. Therefore
$S(U^-[w_J]) U^+[x] U^+[w_J] U^0$ is an algebra. By
\eqref{eq:step} this proves that $(w_J,w_J x)\in A(W)$.
\end{proof}
The following theorem summarizes Propositions \ref{prop:decomp} and
\ref{prop:well-def}. It states that the set $B(W)$ parametrizes the set of homogeneous
right coideal subalgebras of $U$.
\begin{thm} \label{thm:rcsaU}
  The map $B(W)\rightarrow A(W)$ given by $(x,u,J)\mapsto (uw_J, uw_J x)$ is a
  bijection.
\end{thm}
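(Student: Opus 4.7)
The plan is to observe that Theorem~\ref{thm:rcsaU} is essentially already contained in Propositions~\ref{prop:decomp} and~\ref{prop:well-def}: the former constructs an inverse map $A(W)\to B(W)$ together with its uniqueness, while the latter shows that the candidate map $B(W)\to A(W)$ given in the statement is well-defined. So the main task is just to assemble these two results.

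First, I would note that Proposition~\ref{prop:well-def} tells us that the assignment $(x,u,J)\mapsto (uw_J,uw_Jx)$ does indeed land in $A(W)$, so the map of the theorem is well-defined. For surjectivity, take any $(v,w)\in A(W)$. Proposition~\ref{prop:decomp} produces a triple $(x,u,J)\in W^2\times P(\Pi)$ with $v=uw_J$ and $w=uw_Jx$, satisfying conditions~(2) $J\subseteq \Pi\cap x\Pi$ and~(3) $u^{-1}\leq_R x$. But these are precisely the conditions defining membership in $B(W)$, so $(x,u,J)\in B(W)$ and by construction this triple maps to $(v,w)$.

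For injectivity, suppose $(x_1,u_1,J_1)$ and $(x_2,u_2,J_2)$ are two elements of $B(W)$ both mapping to the same pair $(v,w)\in A(W)$. Then each of these triples satisfies properties~(1)--(3) of Proposition~\ref{prop:decomp} for the pair $(v,w)$: property~(1) since it encodes precisely that the triple maps to $(v,w)$, and properties~(2)--(3) by virtue of lying in $B(W)$. The uniqueness clause of Proposition~\ref{prop:decomp} then forces the two triples to coincide.

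I do not anticipate any genuine obstacle, since all the substantial combinatorial work has already been carried out: the existence and good behavior of the decomposition $v=uw_J$, $w=uw_Jx$ was obtained in Proposition~\ref{prop:decomp} via Lemmas~\ref{lem:rueber}, \ref{lem:ik} and~\ref{lem:vvJ}, and the algebra property of $S(U^-[uw_J])U^+[uw_Jx]U^0$ was established in Proposition~\ref{prop:well-def} using the Lusztig automorphism identity in Corollary~\ref{cor:U+wdec} together with Lemma~\ref{lem:S-rewrite}. Theorem~\ref{thm:rcsaU} itself is merely the formal statement that the two constructions are mutually inverse.
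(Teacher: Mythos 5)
Your proposal is correct and matches the paper exactly: the paper gives no separate proof of Theorem~\ref{thm:rcsaU}, stating only that it ``summarizes Propositions~\ref{prop:decomp} and \ref{prop:well-def}'', and your assembly (well-definedness from Proposition~\ref{prop:well-def}, surjectivity from the existence part of Proposition~\ref{prop:decomp}, injectivity from its uniqueness clause, whose proof in the paper indeed uses only properties (1)--(3)) is precisely the intended argument.
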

For any subset $J\subseteq \Pi $ let $U_q(\mathfrak{k}_J)$ denote the Hopf
subalgebra of $\uqg $ generated by $U^0$ and by $E_\alpha ,F_\alpha $ with
$\alpha \in J$.  By Propositions~\ref{prop:vw} and \ref{prop:well-def}
with $u=e$, for any $x\in W$ and any $J\subseteq \Pi \cap x\Pi $ the subspace
$S(U^-[w_J])U^+[w_Jx]U^0$ is a homogeneous right coideal subalgebra of $\uqg $.
By Equation~\eqref{eq:step} this is equivalent to 
$U_q(\mathfrak{k}_J)U^+[x]$ being a homogeneous right coideal subalgebra
of $\uqg $.
\begin{cor}
  For any homogeneous right coideal subalgebra $C$ of $\uqg $
  there exist $x\in W$ and $J\subseteq \Pi \cap x\Pi $
  such that $C$ is isomorphic as an algebra
  to $U_q(\mathfrak{k}_J)U^+[x]$.
\end{cor}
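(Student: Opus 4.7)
The plan is to use Theorem~\ref{thm:rcsaU} to bring $C$ into a normal form and then apply a Lusztig automorphism to reduce to the case $u=e$, which is already treated in the paragraph preceding the corollary. By Theorem~\ref{thm:rcsaU} we can write $C=S(U^-[uw_J])U^+[uw_Jx]U^0$ for a unique triple $(x,u,J)\in B(W)$. In the case $u=e$, the text preceding the corollary identifies this subspace with $U_q(\mathfrak{k}_J)U^+[x]$ via Equation~\eqref{eq:step} and the triangular decomposition of $U_q(\mathfrak{k}_J)$, so the claim holds trivially.

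For arbitrary $u$, fix a reduced expression $u=s_{\alpha_1}\cdots s_{\alpha_k}$ and consider the Lusztig automorphism $T_{u^{-1}}=T_{\alpha_k}\cdots T_{\alpha_1}$. I would show that $T_{u^{-1}}$ restricts to an algebra isomorphism from $C$ onto $S(U^-[w_J])U^+[w_Jx]U^0$, which together with the previous paragraph yields $C\cong U_q(\mathfrak{k}_J)U^+[x]$. The key tool, extracted from the proof of Lemma~\ref{lem:rueber}, is the identity
\begin{align*}
  T_\alpha\bigl(S(U^-[s_\alpha v])U^+[w]U^0\bigr)=S(U^-[v])U^+[s_\alpha w]U^0,
\end{align*}
valid whenever $\ell(s_\alpha v)=\ell(v)+1$ and $\ell(s_\alpha w)=\ell(w)+1$. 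Applied iteratively with $\alpha=\alpha_j$ for $j=1,2,\dots,k$, the pair $(s_{\alpha_j}\cdots s_{\alpha_k}w_J,\,s_{\alpha_j}\cdots s_{\alpha_k}w_Jx)$ is transported to $(s_{\alpha_{j+1}}\cdots s_{\alpha_k}w_J,\,s_{\alpha_{j+1}}\cdots s_{\alpha_k}w_Jx)$ — the cancellation $s_{\alpha_j}s_{\alpha_j}\cdots=s_{\alpha_{j+1}}\cdots$ on the second coordinate is what makes both coordinates shorten simultaneously — and after $k$ iterations one arrives at $(w_J,w_Jx)$, as desired.

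The main obstacle I anticipate is verifying the length hypotheses of Lemma~\ref{lem:rueber} at each of the $k$ steps. The condition on the first coordinate follows from Equation~\eqref{eq:ellv} together with the fact that suffixes of reduced expressions are reduced. For the second coordinate one rewrites $uw_Jx=uxw_K$ with $K=x^{-1}J\subseteq\Pi$ (using $J\subseteq\Pi\cap x\Pi$), and uses $u^{-1}\rwo x$ from Proposition~\ref{prop:decomp}(3) together with Equation~\eqref{eq:ellw} to exhibit a reduced expression of $w_Jx$ whose leading segment is $s_{\alpha_k}\cdots s_{\alpha_1}$. Via the routine cancellations $s_{\alpha_j}\cdots s_{\alpha_k}\cdot s_{\alpha_k}\cdots s_{\alpha_1}=s_{\alpha_{j-1}}\cdots s_{\alpha_1}$, the intermediate elements $s_{\alpha_j}\cdots s_{\alpha_k}w_Jx$ identify with suffixes of this reduced expression and are therefore themselves reduced, yielding the required length increments.
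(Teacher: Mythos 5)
Your proposal is correct and follows essentially the same route as the paper: both reduce to the case $u=e$ by showing that the algebra automorphism $T_{u^{-1}}$ carries $S(U^-[uw_J])U^+[uw_Jx]U^0$ onto $S(U^-[w_J])U^+[w_Jx]U^0=U_q(\mathfrak{k}_J)U^+[x]$. The only difference is organizational — the paper establishes the conjugation identity in one step via Corollary~\ref{cor:SU-wdec}, Lemma~\ref{lem:S-rewrite} and Corollary~\ref{cor:U+wdec}, whereas you iterate the rank-one identity from the proof of Lemma~\ref{lem:rueber} along a reduced expression of $u$, with the length verifications you supply being exactly the needed ones.
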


\begin{proof}
 By Theorem~\ref{thm:rcsaU}, any homogeneous right coideal subalgebra
 of $\uqg $ is of the form $S(U^-[uw_J])U^+[uw_Jx]U^0$ for some $x,u\in W$
 and $J\subseteq \Pi \cap x\Pi $, where $u^{-1}\rwo x$.
 Since $u\rwo uw_J$, we conclude from Corollary~\ref{cor:SU-wdec} that
 $$S(U^-[uw_J])U^0=T^{-1}_{u^{-1}}(S(U^-[w_J]))S(U^-[u])U^0.$$
 Therefore
 \begin{align*}
   &T_{u^{-1}}\big(S(U^-[uw_J])U^+[uw_Jx]U^0\big)\\
   &\quad =T_{u^{-1}}\big( T_{u^{-1}}^{-1}(S(U^-[w_J]))S(U^-[u])U^0
   U^+[uw_Jx]\big)\\
   &\quad =S(U^-[w_J])U^+[u^{-1}]U^0 T_{u^{-1}}(U^+[uw_Jx])\\
   &\quad =S(U^-[w_J])U^+[w_Jx]U^0
 \end{align*}
 where the second equation holds by Lemma~\ref{lem:S-rewrite}
 and the third by Corollary~\ref{cor:U+wdec}
 since $\ell (uw_Jx)=\ell (w_Jx)-\ell (u)$ by
 Equation~\eqref{eq:ellw}.
 The last expression equals $U_q(\mathfrak{k}_J)U^+[x]$ by
 Equation~\eqref{eq:step}.
\end{proof}

\subsection{Calculations}
By Theorem \ref{thm:rcsaU} the number of homogeneous right coideal subalgebras of $U$ coincides with the cardinality $|B(W)|$ of the set $B(W)$. The calculation of $|B(W)|$ is straightforward. For any $x\in W$ one has to determine the cardinality of the set $\{u\in W\,|\,u^{-1}\le_R x\}$ and multiply by $2^{|\Pi \cap x\Pi|}$ to account for the choice of $J$. Finally one has to sum over all $x\in W$. In formulas, this gives
\begin{align}\label{eq:BW}
  |B(W)|=\sum_{x\in W} |\{u\in W\,|\,u\le _R x\}| \cdot 2^{|\Pi\cap x\Pi|}.
\end{align}
As $\Pi\cap x\Pi=\{\beta\in \Pi\,|\, \exists \alpha \in \Pi \mbox{ such that } s_\beta=x s_\alpha x^{-1}, \ell(x s_\al) = \ell(x) +1 \}$, the cardinality $|B(W)|$ is given purely in terms of the Weyl group. This proves the following result.
\begin{cor}
  The number of homogeneous right coideal subalgebras of $U $ depends only on
  the Weyl group $W$ of  $\gfrak $ as a Coxeter group but not explicitly on
  $\gfrak $.
\end{cor}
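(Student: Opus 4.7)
The plan is to invoke Theorem~\ref{thm:rcsaU} to identify the number of homogeneous right coideal subalgebras of $U$ with $|B(W)|$, and then to verify that every quantity entering the count depends only on $W$ viewed as an abstract Coxeter system with simple reflections $S=\{s_\alpha\mid \alpha\in \Pi\}$.

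First, Theorem~\ref{thm:rcsaU} combined with Proposition~\ref{prop:vw} shows that the homogeneous right coideal subalgebras of $U$ are parametrized by $B(W)$, so the number sought equals $|B(W)|$. Partitioning $B(W)$ according to the first coordinate $x\in W$, the components $u$ and $J$ vary independently: $u$ ranges over those elements of $W$ with $u^{-1}\le_R x$, while $J$ ranges over all subsets of $\Pi\cap x\Pi$. This yields formula~\eqref{eq:BW}. The weak order $\le_R$ is defined purely in terms of the Coxeter length function of $(W,S)$, so the first factor in the summand is an invariant of the Coxeter system.

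For the second factor, I would establish the identity
\[
\Pi\cap x\Pi=\{\beta\in \Pi\mid \exists\,\alpha\in \Pi\text{ such that }xs_\alpha x^{-1}=s_\beta\text{ and }\ell(xs_\alpha)=\ell(x)+1\}.
\]
For the forward inclusion, if $\beta=x\alpha$ with $\alpha,\beta\in \Pi$, then $xs_\alpha x^{-1}=s_{x\alpha}=s_\beta$, and $\ell(xs_\alpha)=\ell(x)+1$ follows from $x\alpha=\beta\in \Phi^+$. Conversely, $xs_\alpha x^{-1}=s_\beta$ forces $x\alpha=\pm\beta$; the length condition forces $x\alpha\in \Phi^+$, so $x\alpha=\beta$ and hence $\beta\in x\Pi$. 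Since the right-hand side is phrased entirely via simple reflections, conjugation, and the length function, the cardinality $|\Pi\cap x\Pi|$ is also a Coxeter invariant.

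Combining both factors, every summand of~\eqref{eq:BW}, and hence $|B(W)|$ itself, depends only on the Coxeter system $(W,S)$. The main technical point is the characterization of $\Pi\cap x\Pi$ above; once the length condition is used to eliminate the sign ambiguity between $x\alpha$ and $-x\alpha$, the rest is a direct verification.
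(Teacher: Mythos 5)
Your proposal is correct and follows essentially the same route as the paper: identify the count with $|B(W)|$ via Theorem~\ref{thm:rcsaU}, derive formula~\eqref{eq:BW}, and observe that $\Pi\cap x\Pi$ admits a purely Coxeter-theoretic description. The only difference is that you actually verify the characterization of $\Pi\cap x\Pi$ (correctly, using $s_{x\alpha}=xs_\alpha x^{-1}$ and the length criterion to fix the sign), whereas the paper merely states it.
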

\begin{eg}\label{eg:G2}
  Let $\gfrak$ be of type $G_2$. For simplicity we denote the standard generators of $W$ by $s_1$ and $s_2$.
  In Table \ref{tab:G2} we have recorded the numbers $|\{u\in W\,|\,u\le_R x\}|$ and $|\Pi\cap x\Pi|$ for all
  $x\in W$. A straightforward calculation using \eqref{eq:BW} then yields $|B(W)|=68$. This calculation
  corrects the number given in \cite{a-Pogorelsky11}, see also Remark~\ref{rem:diffnumbers}.
  \begin{table}
  \centering
  \begin{tabular}{|c|c|c|}
    \hline
    $x$ & $|\{u\in W\,|\,u\le_R x\}|$ & $|\Pi\cap x\Pi|$ \\
    \hline
    $e$ & $1$ & $2$ \\
    $s_1$ & $2$ & $0$ \\
    $s_2$ & $2$ & $0$ \\
    $s_1 s_2$ & $3$ & $0$ \\
    $s_2 s_1$ & $3$ & $0$ \\
    $s_1 s_2 s_1$ & $4$ & $0$ \\
    $s_2 s_1 s_2$ & $4$ & $0$ \\
    $s_1 s_2 s_1 s_2$ & $5$ & $0$ \\
    $s_2 s_1 s_2 s_1$ & $5$ & $0$ \\
    $s_1 s_2 s_1 s_2 s_1$ & $6$ & $1$ \\
    $s_2 s_1 s_2 s_1 s_2$ & $6$ & $1$ \\
    $s_1 s_2 s_1 s_2 s_1 s_2$ & $12$ & $0$ \\
    \hline
  \end{tabular}
  \caption{Calculating $|B(W)|$ in the case $G_2$}
  \label{tab:G2}
\end{table}
\end{eg}
It is possible to perform calculations as in Example \ref{eg:G2} by hand
for $W$ of types $A_2$, $A_3$, $B_2$, and $B_3$. To obtain the number $|B(W)|$
also for examples with larger Weyl groups we have used the computer algebra
program FELIX \cite{inp-ApelKlaus,a-FELIX}. This allowed us to determine
$|B(W)|$ for all Weyl groups of cardinality less than $10^6$. The results are
displayed in Table~\ref{tab:nrcsa}. Our calculations confirm the results
obtained in \cite{a-KharSag08} for type $A_n$.

\begin{rema} \label{rem:diffnumbers}
  In \cite{a-Pogorelsky11}, Pogorelsky applies Kharchenko's method to determine the
  homogeneous right coideal subalgebras of $\uqg $ and of the related small quantum group
  for $\gfrak $ of type $G_2$.
  She obtains a number different from ours due to a hidden mistake in her previous work \cite{a-Pogorelsky09}
  on right coideal subalgebras of the standard Borel Hopf subalgebra of $\uqg $.

  In \cite{a-KharSagRiv11}, Kharchenko, Sagahon and Rivera calculate the number of homogeneous
  right coideal subalgebras of $\uqg $ for $\gfrak $ of type $B_n$ with $n\ge 2$.
  Again, the numbers for $n\ge 3$ are different from (more precisely, smaller than) the numbers given in 
  Table~\ref{tab:nrcsa}. We have checked the cases $B_3$ and $B_4$ and confirm that both our description and the theory in
  \cite{a-KharSagRiv11} provide the same set of right coideal subalgebras and consequently the same numbers, that is,
  $664$ for $B_3$ and $17848$ for $B_4$. Hence there seems to be an error in the explicit computer calculations in \cite{a-KharSagRiv11}.  
\end{rema}

\begin{table}
  \centering
  \begin{tabular}{|r|r|r|}
    \hline
    $W$ & $|W|$ & $|B(W)|$ \\
    \hline
    $A_1$ & $2$ & $4$ \\
    $A_2$ & $6$ & $26$ \\
    $A_3$ & $24$ & $252$ \\
    $A_4$ & $120$ & $3368$ \\
    $A_5$ & $720$ & $58810$ \\
    $A_6$ & $5040$ & $1290930$ \\
    $A_7$ & $40320$ & $34604844$ \\
    $A_8$ & $362880$ & $1107490596$ \\
    \hline
    $B_2$ & $8$ & $38$ \\
    $B_3$ & $48$ & $664$ \\
    $B_4$ & $384$ & $17848$ \\
    $B_5$ & $3840$ & $672004$ \\
    $B_6$ & $46080$ & $33369560$ \\
    $B_7$ & $645120$ & $2094849020$ \\
    \hline
    $D_4$ & $192$ & $6512$ \\
    $D_5$ & $1920$ & $238720$ \\
    $D_6$ & $23040$ & $11633624$ \\
    $D_7$ & $322560$ & $720453984$ \\
    \hline
    $E_6$ & $51840$ & $38305190$ \\
    \hline
    $F_4$ & $1152$ & $91244$ \\
    \hline
    $G_2$ & $12$ & $68$\\
    \hline
  \end{tabular}
  \caption{Number of homogeneous right coideal subalgebras of $U $}
  \label{tab:nrcsa}
\end{table}


\providecommand{\bysame}{\leavevmode\hbox to3em{\hrulefill}\thinspace}
\providecommand{\MR}{\relax\ifhmode\unskip\space\fi MR }
\providecommand{\MRhref}[2]{%
  \href{http://www.ams.org/mathscinet-getitem?mr=#1}{#2}
}
\providecommand{\href}[2]{#2}

\end{document}